\author{Kevin Ford}
\author{Paul Pollack}
\address{Department of Mathematics\\1409 West Green Street\\University of Illinois at Urbana-Champaign\\ Urbana, Illinois 61801\\USA}
\newtheorem*{conjUHL}{Hypothesis UL}
\newtheorem{thm}{Theorem}
\newtheorem{lem}{Lemma}
\theoremstyle{remark}
\DeclareMathAlphabet{\curly}{U}{rsfs}{m}{n}
\newcommand{\be}{\begin{equation}}
\newcommand{\ee}{\end{equation}}
\newcommand{\benn}{\begin{equation*}}
\newcommand{\eenn}{\end{equation*}}
\newcommand{\bal}{\begin{align*}}
\newcommand{\ea}{\end{align*}}
\newcommand{\eal}{\ensuremath{\end{align*}}}
\newcommand{\bea}{\begin{eqnarray}}
\newcommand{\eea}{\end{eqnarray}}
\renewcommand{\a}{\ensuremath{\alpha}}
\newcommand{\eps}{\ensuremath{\varepsilon}}
\renewcommand{\(}{\left(}
\renewcommand{\)}{\right)}
\renewcommand{\le}{\leqslant}
\renewcommand{\leq}{\leqslant}
\renewcommand{\geq}{\geqslant}
\begin{document}
\title{On common values of $\phi(n)$ and $\sigma(m)$, I}
\begin{abstract}
We show, conditional on a uniform version of the prime $k$-tuples conjecture, that there are $x/(\log{x})^{1+o(1)}$ numbers not exceeding $x$ common to the ranges of $\phi$ and $\sigma$.  Here $\phi$ is Euler's
totient function and $\sigma$ is the sum-of-divisors function.
\end{abstract}

\thanks{
The first author was supported by NSF Grant DMS-0901339. 
The second author was supported by an NSF Postdoctoral Fellowship (award DMS-0802970).  The research was conducted in part 
while the authors were visiting the
Institute for Advanced Study, the first author supported by grants
from the Ellentuck Fund and The Friends of the Institute For
Advanced Study.  Both authors thank the IAS for its
hospitality and excellent working conditions.}

\renewcommand{\labelenumi}{(\roman{enumi})}
\def\d{\delta}
\def\D{\mathcal{D}}
\def\A{\mathcal{A}}
\def\B{\mathcal{B}}
\def\fancyA{\curly{A}}
\def\fancyB{\curly{B}}
\def\I{\mathcal{I}}
\def\J{\mathcal{J}}
\def\V{\curly{V}}
\def\N{\mathbf{N}}
\def\Q{\mathbf{Q}}
\def\Z{\mathbf{Z}}
\def\Cc{\mathcal{C}}
\def\Pp{\mathcal{P}}
\def\Ss{\mathcal{S}}
\maketitle


\section{Introduction}
For each positive-integer valued arithmetic function $f$, let $\V_{f}
 \subset \N$ denote the image of $f$, and put $\V_{f}(x) := \V_f\cap [1,x]$
 and $V_f(x) := \#\V_f(x)$. In this paper we are primarily concerned with
 the cases when $f=\phi$, the Euler totient function, and when $f=\sigma$,
 the usual sum-of-divisors function. When $f=\phi$, the study of the
 counting function $V_f$ goes back to Pillai \cite{pillai29}, and was
 subsequently taken up by Erd\H{o}s \cite{erdos35, erdos45}, Erd\H{o}s and
 Hall \cite{EH73, EH76},  Pomerance \cite{pomerance86}, Maier and Pomerance
 \cite{MP88}, and Ford \cite{ford98} (with an announcement in
 \cite{ford98A}). From the sequence of results obtained by these authors,
we mention Erd\H{o}s's asymptotic formula (from \cite{erdos35}) for
 $\log\frac{V_f(x)}{x}$, namely
\begin{equation}\label{eq:erdosestimate0}
 V_f(x) = \frac{x}{(\log{x})^{1+o(1)}} \quad (x\to\infty) \end{equation}
and the much more intricate determination of the precise order of magnitude by Ford,
\begin{equation}\label{eq:fordestimate0} V_f(x) \asymp \frac{x}{\log{x}} \exp(C(\log_3{x}-\log_4{x})^2+D\log_3{x}-(D+1/2-2C)\log_4{x}). \end{equation}
Here the constants $C$ and $D$ are defined as follows: Let
\begin{equation}\label{eq:somedefs} F(z) :=\sum_{n=1}^{\infty} a_n z^n, \quad\text{where}\quad a_n = (n+1)\log(n+1)-n\log{n}-1.\end{equation}
Since each $a_n >0$ and $a_n \sim \log{n}$ as $n\to\infty$, it follows that $F(z)$ converges
to a continuous, strictly increasing function on $(0,1)$, and $F(z)\to\infty$ as $z\uparrow 1$. Thus there is a unique real number $\rho$ for which
\begin{equation}\label{eq:rhodef} F(\rho) = 1 \quad (\rho = 0.542598586098471021959\ldots).\end{equation}
In addition, $F'$ is strictly increasing, and
$F'(\rho) = 5.697758 \ldots$.
Then $C = \frac{1}{2|\log \rho|} = 0.817814\ldots$ and
$D = 2C(1+\log F'(\rho) - \log(2C)) - 3/2 = 2.176968 \ldots$.
In \cite{ford98}, it is also shown that \eqref{eq:fordestimate0} holds for a wide class of $\phi$-like functions, including $f=\sigma$. Consequently,  $V_\phi(x) \asymp V_{\sigma}(x)$.

Erd\H{o}s (see \cite[8, p. 172]{erdos59} or \cite{EG80}) asked if it could be proved that infinitely many natural numbers appear in both $\V_{\phi}$ and $\V_{\sigma}$. This question was recently answered by Ford, Luca, and Pomerance \cite{FLP10}. Writing $V_{\phi,\sigma}(x)$ for the number of common values of $\V_\phi$ and $\V_\sigma$ up to $x$, they proved that
\[ V_{\phi,\sigma}(x) \geq \exp((\log\log{x})^{c}) \]
for some positive constant $c > 0$ and all large $x$
(in \cite{Ga} this is shown for \emph{all} constants $c>0$).
This lower bound is probably very far from the truth. A naive guess, based on \eqref{eq:erdosestimate0} and the
hypothesis of independence, might be that $V_{\phi,\sigma}(x) = x/(\log{x})^{2+o(1)}$. However, the analysis of \cite{ford98} indicates that elements of $\V_{\phi}$ and $\V_{\sigma}$ share many structural features, which suggests that perhaps $V_{\phi,\sigma}$ is larger than this naive prediction.

In this paper we show that this is indeed the case, subject to the following plausible hypothesis:
\begin{conjUHL} Suppose $a_1, \dots, a_h$ are positive integers and $b_1, \dots, b_h$ are integers such that $\prod_{1 \leq i < j \leq h}(a_i b_j - a_j b_i) \neq 0$. Assume that for some constant $A > 0$, we have
\[ \max_{1 \leq i \leq h}\{|a_i|, |b_i|\} \leq x^{A}. \]
Then for large $x$, depending on $A$ and $h$, the number of natural numbers $n\leq x$ for which $a_i n + b_i$ is prime for every $1 \leq i \leq h$ is
\[ \gg_{A, h} C \frac{x}{(\log{x})^h}. \]
Here $C$ is the singular series associated to $\{a_i n + b_i\}_{i=1}^{h}$, defined by
\[ C := \prod_{p}\frac{1-\nu(p)/p}{(1-1/p)^{h}}, \quad\text{where}\quad\nu(p):=\#\{n\bmod{p}: \prod_{i=1}^{h}{(a_i n + b_i)}\equiv 0 \pmod{p}\}.  \]
\end{conjUHL}

This hypothesis is a quantitative form of Dickson's prime $k$-tuples conjecture. The name ``Hypothesis UL'' (with ``$L$'' for linear) is suggested by an analogous hypothesis proposed by Martin \cite{martin02} to study smooth values of polynomials. His ``Hypothesis UH'' makes a somewhat stronger prediction in the more general context of Hypothesis H, in a similar range of uniformity.  A very special case of Hypothesis
UL, that the number of twin primes $p,p+2 \le x$ is $\gg x/\log^2 x$,
implies immediately that $ V_{\phi,\sigma}(x) \gg x/\log^2 x$.

\begin{thm}\label{thm:phisiglower} Assume Hypothesis UL. Then as $x\to\infty$,
\[ V_{\phi,\sigma}(x) \geq \frac{x}{(\log{x})^{1+o(1)}}. \]
\end{thm}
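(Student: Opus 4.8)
The plan is to produce an explicit family $\curly{W}\subseteq\V_{\phi}\cap\V_{\sigma}$ with $\#(\curly{W}\cap[1,x])\ge x/(\log x)^{1+o(1)}$; the bound is then immediate, and one should note it is essentially optimal, since $V_{\phi,\sigma}(x)\le V_{\phi}(x)=x/(\log x)^{1+o(1)}$ by \eqref{eq:erdosestimate0}. Each member of $\curly{W}$ will be exhibited at once as a totient of a highly composite integer and as a sum of divisors. The model to keep in mind is that whenever $v-1$ is prime one has $v=\sigma(v-1)$, so a totient $v\le x$ with $v-1$ prime is automatically a common value; more generally one may allow $v=\sigma(m)$ for $m$ in a mildly enlarged class.

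Concretely I would fix a bound $z=\exp(\tfrac{c\log x}{\log_2 x})$ with $c>0$ small and an integer $k\asymp\log_2 x$, take distinct primes $p_1<\dots<p_k\le z$, set $D=\prod_{i=1}^k(p_i-1)$ (so $D\le\sqrt x$), and then ask for a prime $q\le x/D$ with the single extra property that $Dq-(D+1)$ is prime. For each such choice, $v:=(q-1)D=\phi(qp_1\cdots p_k)\in\V_{\phi}$, while $v-1=Dq-(D+1)$ is prime, so $v=\sigma(v-1)\in\V_{\sigma}$, and $v\le x$. Counting the prime $q$ is exactly where Hypothesis UL enters: the two conditions ``$q$ prime'' and ``$Dq-(D+1)$ prime'' are non-degenerate linear forms in $q$ (here $a_1b_2-a_2b_1=-(D+1)\ne0$) with coefficients $O(\sqrt x)$, so Hypothesis UL with $h=2$ gives $\gg\mathfrak{S}(D)\,(x/D)/(\log x)^2$ admissible $q$, where $\mathfrak{S}(D)\gg1$ uniformly. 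Summing over the tuples $(p_i)$, the number of admissible configurations is
\[ \gg\frac{x}{(\log x)^2}\sum_{(p_i)}\frac1{D}, \]
and since the $1/D$-weighted count of $k$-subsets of primes up to $z$ is, near the optimal $k\asymp\log_2 x$, of size $(\log x)^{1-o(1)}$ (a Poisson-mode computation), there are $\gg x/(\log x)^{1+o(1)}$ admissible configurations in all.

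The main obstacle is to conclude that these configurations realise $\gg x/(\log x)^{1+o(1)}$ \emph{distinct} values $v$, i.e.\ to control the multiplicity with which a given $v$ arises; the naive divisor bound $\tau(v)=x^{o(1)}$ is hopelessly weak, and a loss of even one power of $\log x$ would be fatal. I would attack this by a second-moment (multiplicative-energy) estimate: a coincidence $(q-1)D=(q'-1)D'$ with $q\ne q'$ forces, on writing $\delta=D/\gcd(D,D')$ and $\delta'=D'/\gcd(D,D')$, that $q-1=\delta' t$ and $q'-1=\delta t$ for an integer $t$ with $v=t\cdot\mathrm{lcm}(D,D')$, whereupon $\delta t+1$, $\delta' t+1$ and $t\cdot\mathrm{lcm}(D,D')-1$ are \emph{three} non-proportional linear forms in $t$; by the unconditional upper-bound sieve the number of such $t$ is $\ll\tfrac{x/\mathrm{lcm}(D,D')}{(\log x)^3}$, an extra factor $1/\log x$ beyond the main term. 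The delicate point — and the place where the Erd\H{o}s--Ford analysis of the anatomy of shifted primes is needed — is to show that the resulting double sum $\sum_{D,D'}\mathfrak{S}'(D,D')/\mathrm{lcm}(D,D')$ is at most $(\log x)^{o(1)}$ times $\big(\sum_{(p_i)}1/D\big)^2$; this forces one to trim the family to tuples whose products $D$ do not have too many divisors (equivalently, to control $\Omega(D)$, or to impose that the $P^{+}(p_i-1)$ be large and pairwise distinct), and to verify that such a restriction costs only a factor $(\log x)^{o(1)}$ in the count — the same balancing act, between needing many prime factors and keeping coincidences rare, that governs Erd\H{o}s's analysis of $V_\phi$. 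Granting this energy bound, Cauchy--Schwarz yields $\#(\curly{W}\cap[1,x])\gg x/(\log x)^{1+o(1)}$, completing the proof.
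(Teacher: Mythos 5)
Your construction is elegant and genuinely different from the paper's. The device of taking $v = \phi(q p_1\cdots p_k) = (q-1)\prod(p_i-1)$ and then requiring $v-1$ prime (so $v = \sigma(v-1)$) makes the $\sigma$ side almost trivial, and the first-moment count via Hypothesis UL applied to the two forms $q$ and $Dq-(D+1)$ is sound. The paper treats $\phi$ and $\sigma$ symmetrically, builds heavily structured preimage sets $\fancyB_\phi,\fancyB_\sigma$ for a \emph{fixed} number $k$ of prime factors, and uses H\"older (with exponents $3, 3, 3$) rather than Cauchy--Schwarz; you instead take $k\asymp\log_2 x$ many small primes $p_i$ and aim at a second-moment bound.

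The gap is precisely where you flag it, but it is not merely ``delicate''---as proposed, it fails, and the fix you suggest cannot repair it. Every $D=\prod(p_i-1)$ in your family is divisible by $2^k$ (each $p_i-1$ is even), so $\gcd(D,D')\ge 2^k$ for \emph{every} pair. Thus $1/\mathrm{lcm}(D,D')=\gcd(D,D')/(DD')\ge 2^k/(DD')$, and
\[
\sum_{D\ne D'}\frac{1}{\mathrm{lcm}(D,D')}\ \ge\ (1-o(1))\,2^k\Bigl(\sum_D\frac{1}{D}\Bigr)^2 .
\]
With $k\asymp\log_2 x$ one has $2^k=(\log x)^{\Theta(1)}$, and the three-form singular series $\mathfrak{S}'(D,D')$ is generically $\gg1$ (indeed the Euler factors at primes dividing $\mathrm{lcm}(D,D')$ are $\ge 1$), so nothing cancels this loss. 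In particular $\sum_{D\ne D'}\mathfrak{S}'(D,D')/\mathrm{lcm}(D,D')$ cannot be $(\log x)^{o(1)}\bigl(\sum_D 1/D\bigr)^2$, so the sieve upper bound for the off-diagonal is a positive power of $\log x$ larger than the diagonal. Trimming to control $\Omega(D)$ or to force the $P^+(p_i-1)$ large and distinct does not help, because $2^k\mid D$ is unconditional in your family. If one optimizes $k=c\log_2 x$ over $c$, balancing $2^k$ against $\sum_D 1/D\approx(\log x)^{c(1-\log c)}$, the Cauchy--Schwarz lower bound tops out around $x/(\log x)^{1.26}$---a nontrivial bound, but a fixed positive power of $\log x$ short of the theorem.

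The paper sidesteps exactly this issue. It keeps $k$ bounded and instead gives each individual $p_i-1$ a prescribed anatomy: about $(\alpha^{i-1}-\alpha^i)\log_2 x$ prime factors in each shell $(v_i,v_{i-1}]$ (condition (iii)), the part on primes $\le v_k$ rigidly equal to $6$ (condition (iv)), and the part on primes $>v_k$ squarefree and with the shifted primes pairwise coprime there (condition (v)). That rigidity is what makes the unconditional upper bound of Lemma~\ref{lem:upper} comparable to the conditional lower bound of Lemma~\ref{lem:lower}, and H\"older is chosen so that the moments to be controlled are $\sum R_\phi^2 R_\sigma$ and $\sum R_\phi R_\sigma^2$, which the construction is tailored for. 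To rescue your approach you would need to impose a comparable anatomy on $q-1$ and each $p_i-1$ that rigidifies the small-prime part of $v$---at which point you are essentially reconstructing the paper's $\fancyB_\phi$.
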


The proof, which proceeds along entirely different lines than \cite{FLP10}, has its origin in the following simple observation: Write $R_{f}(v):= \#f^{-1}(v)$ for the number of preimages of $v$ under the arithmetic function $f$. By H\"{o}lder's inequality, we have
\begin{equation}\label{eq:holder}\left(\sum_{v \leq x}R_\phi(v) R_\sigma(v)\right)^3 \leq V_{\phi,\sigma}(x) \left(\sum_{v \leq x} R_{\phi}(v)^2 R_{\sigma}(v)\right) \left(\sum_{v \leq x} R_{\phi}(v) R_{\sigma}(v)^2\right). \end{equation}
In particular, to prove Theorem \ref{thm:phisiglower}, it would suffice to show that the left-hand sum is bounded below by $x/(\log{x})^{1+o(1)}$ while the two sums appearing on the right-hand side are bounded above by $x/(\log{x})^{1+o(1)}$. Unfortunately these estimates are not so easy to obtain. It turns out that rather than count all preimages, as in our definition of $R_f$ above, it is easier to obtain analogous estimates if we count only preimages belonging to certain specially constructed sets. The choice of these sets is motivated by the detailed structure theory of preimages developed in \cite{MP88} and \cite{ford98}.

\subsection*{Notation}  Most of our number-theoretic notation is standard. Possible exceptions include
$P^+(n)$ for the largest prime factor of $n$, and $\Omega(n,U,T)$ for the total number of prime factors $p$ of $n$
with $U < p \le T$, counted according to multiplicity.

Big-Oh notation and the related symbols ``$\ll$,'' ``$\gg$,'' and ``$\asymp$'' appear with their usual meanings,
including subscripts to indicate the dependence of implied constants. We use $o_k(1)$ for a quantity that tends to zero for each fixed value of $k$.
We also put $\log_1{x} = \max\{1,\log{x}\}$ and we write $\log_k$ for the $k$th iterate of $\log_1$.


\section{Proof of Theorem \ref{thm:phisiglower}}
We now construct our surrogate representation functions. For a set
$\fancyB$ of natural numbers and $f$ an arithmetic function, let
\[ R_{f}(v;\fancyB) := \#\{n \in \fancyB: f(n)=v\}. \]
Then \eqref{eq:holder} continues to hold if we replace $R_\phi(v)$ by $R_{\phi}(v; \fancyB_{\phi})$ and $R_{\sigma}(v)$ by $R_{\sigma}(v; \fancyB_{\sigma})$. We now describe our choices of $\fancyB_{\phi}$ and $\fancyB_{\sigma}$.

It is convenient to work not with a single set $\fancyB_{\phi}$, but with a family of such sets, and similarly for $\fancyB_{\sigma}$. Our definition of these sets depends on a real parameter $\a$, which we always suppose satisfies $1/2 < \a < \rho$ (with $\rho$ as in \eqref{eq:rhodef}), on a natural number parameter $k$, and on $x$. We define $\fancyB_{\phi}^{\alpha,k}(x)$ as the set of natural numbers $n$ possessing all of the following properties:
\begin{enumerate}
\item $n$ is the product of $k$ distinct primes and $\phi(n) \leq x$.
\item If $p_0 > \cdots > p_{k-1}$ is the decreasing list of the primes dividing $n$, then
\[ v_i^{1/12} < P^{+}(p_i-1) \leq p_i-1 \leq v_i, \quad\text{and}\quad v_i = \exp((\log{x})^{\alpha^i});\]
also, $P^{+}(p_i-1)$ is the unique prime divisor of $p_i-1$ exceeding $v_i^{1/12}$ for $1 \leq i \leq k-1$.
\item If $1 \leq j \leq i \leq k$, we have
\[ \left|\Omega(p_{j-1}-1,v_i,v_{i-1})-(\a^{i-1}-\a^{i})\log_2{x}\right| \leq 2k\sqrt{(\a^{i-1}-\a^i)\log_2{x}}.\]
\item $6$ is the largest factor of $p_i-1$ supported on the primes $\leq v_k$.
\item If $p \mid \phi(n)$ and $p > v_k$, then $p \parallel \phi(n)$.
\end{enumerate}
We define $\fancyB_{\sigma}^{\alpha,k}(x)$ analogously, with $\phi$ replaced by $\sigma$ in (i) and (v) and $p-1$ replaced by $p+1$ throughout in
(ii)--(iv).
If $\alpha$, $k$, and $x$ are all understood, we write simply $\fancyB_{\phi}$ and $\fancyB_{\sigma}$.

In order to establish Theorem \ref{thm:phisiglower},
it is enough to prove the following two estimates:
\begin{lem}\label{lem:lower} Assume Hypothesis UL. Let $\epsilon > 0$. There is a real number $1/2 < \alpha_0 < \rho$ and a natural number $k_0$ with the following property: If $\alpha_0 < \alpha < \rho$ and $k \geq k_0$, then for all large enough $x$ (depending on $\alpha$ and $k$),
\[ \sum_{v \leq x} R_{\phi}(v; \fancyB_{\phi})R_{\sigma}(v; \fancyB_{\sigma}) \geq \frac{x}{(\log{x})^{1+\epsilon}}.\]
In other words,
there are at least $x/(\log{x})^{1+\epsilon}$ solutions $(n,m)$ to
\[ \phi(n)=\sigma(m), \quad\text{where}\quad (n,m)\in \fancyB_{\phi}\times \fancyB_{\sigma}. \]
\end{lem}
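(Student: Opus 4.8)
The plan is to produce, via Hypothesis UL, a large family of pairs $(n,m)\in\fancyB_\phi\times\fancyB_\sigma$ for which $\phi(n)=\sigma(m)$, by building $n$ and $m$ simultaneously from the ``top down'' in $k$ stages. At stage $i$ (for $i=0,1,\dots,k-1$) one chooses a prime $p_i$ dividing $n$ with $P^+(p_i-1)\le v_i$ and a prime $q_i$ dividing $m$ with $P^+(q_i+1)\le v_i$, arranging that the contributions of $p_i$ to $\phi(n)$ and of $q_i$ to $\sigma(m)$ match. Concretely, one first fixes the ``small'' part: a value $d$ supported on primes $\le v_k$ that will equal both the $v_k$-smooth part of $\phi(n)$ and that of $\sigma(m)$ (here property (iv), the requirement that $6$ is the largest factor of $p_i\mp 1$ supported on primes $\le v_k$, is what makes these small parts easy to control and to match). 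Then one chooses the large prime factors of the $p_i-1$ and $q_i+1$ to be a common set of primes $P_1>\cdots>P_{k-1}$ in the dyadic ranges dictated by (ii), so that $\phi(n)=d\cdot P_1\cdots P_{k-1}\cdot(\text{contribution of }p_0)$ and similarly for $\sigma(m)$; finally $p_0-1$ and $q_0+1$ are forced to carry the remaining factor needed to make $\phi(n)=\sigma(m)$ exactly.

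The engine is an inductive application of Hypothesis UL. Suppose $p_{i+1},\dots,p_{k-1}$ and $q_{i+1},\dots,q_{k-1}$ have been chosen. To produce $p_i$: we need a prime $p_i\le v_i$ with $p_i-1 = P_i\cdot m_i$ where $P_i$ is a prescribed prime near $v_{i+1}$ and $m_i$ ranges over integers $\le v_i/P_i$ that are $v_{i+1}^{1/12}$-smooth, have the right value of $\Omega(\,\cdot\,,v_j,v_{j-1})$ for each $j$, and whose $v_k$-smooth part is a divisor of $6$; each such $m_i$ gives the two-variable linear system ``$P_i m_i\, t+1$ prime and (a companion condition ensuring $q_i$ exists with matching prime factorization)''. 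Counting $m_i$ of this shape is a standard smooth-number-with-restricted-$\Omega$ estimate; for each, Hypothesis UL (applied with $h$ bounded in terms of $k$, and $A$ depending on $\alpha$) yields $\gg x/(\log x)^{O(1)}$ admissible $t$, hence $\gg v_i/(\log v_i)^{O(1)}$ primes $p_i$ in each $m_i$-class. Multiplying through the $k$ stages and summing over the choices of the common primes $P_1,\dots,P_{k-1}$ and the small part $d$, one obtains $\ge x/(\log x)^{1+\epsilon}$ valid pairs provided $\alpha$ is close enough to $\rho$ and $k$ is large — the exponent $1+\epsilon$ emerging exactly as in Erd\H{o}s's heuristic \eqref{eq:erdosestimate0}, since $\sum_{i\ge1}(\alpha^{i-1}-\alpha^i)\log(1/\alpha^i)$-type sums governed by $F$ contribute the ``$1$'' and the slack in (iii) contributes the ``$+\epsilon$''.

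Two points require care. First, admissibility: before invoking Hypothesis UL we must check $\nu(p)<p$ for all $p$ for the relevant tuple, i.e.\ that the product $\prod(a_i t+b_i)$ is not identically zero modulo any prime; this is where the coprimality conditions built into the construction (the factor $6$, the distinctness of the $P_j$, the smoothness cutoffs) are used, and one must verify the determinant condition $\prod_{i<j}(a_ib_j-a_jb_i)\neq0$. Second, and this is the main obstacle, one must keep the uniformity under control: the coefficients $a_i,b_i$ in each application involve products of the already-chosen primes and of $m_i$, which can be as large as a fixed power of $x$, so $A$ stays bounded — but one has to confirm that the implied constant in Hypothesis UL, which depends on $A$ and $h$ (hence on $\alpha$ and $k$ but not on $x$), does not degrade the final count, and that the number of nested applications $h$ remains bounded by a function of $k$ alone. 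Handling the bookkeeping so that the $k$-fold product of lower bounds, each of the form $\gg_{\alpha,k} (\cdot)/(\log)^{O(1)}$, still beats $x/(\log x)^{1+\epsilon}$ after the loss from the singular series and from restricting $\Omega$ in (iii), is the delicate heart of the argument.
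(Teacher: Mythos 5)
Your outline matches the paper's general scaffolding -- build $n$ and $m$ prime by prime from the bottom up, imposing the structural constraints (ii)--(v) at each stage, and invoke Hypothesis UL to produce each new prime -- but it omits the central mechanism that makes the count come out to $x/(\log x)^{1+\eps}$, namely the \emph{dual factorizations}. The equation $\phi(n)=\sigma(m)$ requires $\prod_j(p_j-1)=\prod_j(q_j+1)$, and your sketch never makes precise how this product identity is maintained. Saying the ``large prime factors'' $P_1>\dots>P_{k-1}$ are shared and writing $\phi(n)=d\cdot P_1\cdots P_{k-1}\cdot(\text{contribution of }p_0)$ is not correct: each $p_i-1$ contributes further medium-sized prime factors beyond its $v_k$-smooth part and its largest prime, and unless these middle parts also multiply to the same number on the $\phi$- and $\sigma$-sides, the products will not match. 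The ``companion condition ensuring $q_i$ exists with matching prime factorization'' is where this would have to happen, but you leave it unspecified. If ``matching'' means $p_i-1=q_i+1$ (so each $p_i,q_i$ is a twin-prime-like pair), the construction is valid but far too rigid: running the exponent computation with a single $i$-fold factorization per stage in place of a pair gives an exponent $2+\alpha/(1-\alpha)+o_k(1)\approx 3.2$, not $1+\eps$. Alternatively, deferring all the imbalance to $p_0-1$ and $q_0+1$ forces a fixed rational ratio between them and kills the count.

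What the paper actually does is, at each stage $i$, pick a single squarefree integer $t_i^\ast$ supported on $(v_i, v_{i-1}^{1/(12\log_2 x)}]$ and then choose \emph{two different} ordered $i$-fold factorizations of $t_i^\ast$ -- one distributed among the parts $t_{i,0},\dots,t_{i,i-1}$ of the $p_j-1$ and the other among the parts $t'_{i,0},\dots,t'_{i,i-1}$ of the $q_j+1$ -- together with one shared large prime $Q_i$. The product identity in range $(v_i,v_{i-1}]$ then holds automatically because both sides equal $t_i^\ast Q_i$, and Hypothesis UL is applied only once per stage, to the three forms $T$, $s_{i,i-1}u_i T+1$, $s'_{i,i-1}u'_iT-1$ in the single variable $T=Q_i$. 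Crucially, the number of dual factorizations with the balanced-$\Omega$ constraint is $\gg i^{2N_i}$ where $N_i=\lfloor i(\alpha^{i-1}-\alpha^i)\log_2 x\rfloor$; this extra combinatorial factor, squared relative to the twin-prime variant, is exactly what turns the exponent into $2-F(\alpha)+o_k(1)$, which is driven below $1+\eps$ by choosing $\alpha$ near $\rho$ and $k$ large. Without this idea the exponent computation you gesture at (``governed by $F$'') cannot be carried out, and the lemma does not follow.
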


\renewcommand{\labelenumi}{(\alph{enumi})}

\begin{lem}\label{lem:upper} Let $\epsilon > 0$. There is a natural number $k_0$ with the following property: If $1/2 < \a < \rho$ and $k \geq k_0$, then for all large enough $x$ (depending on $\alpha$ and $k$),
\[ \sum_{v \leq x} R_{\phi}(v; \fancyB_{\phi})^2 R_{\sigma}(v; \fancyB_{\sigma}) \leq \frac{x}{(\log{x})^{1-\eps}}.\]
In other words,
there are at most $x/(\log{x})^{1-\eps}$ solutions $(n,n',m)$ to
\[ \phi(n)=\phi(n')=\sigma(m), \quad\text{where}\quad (n,n',m)\in \fancyB_{\phi}\times\fancyB_{\phi}\times \fancyB_{\sigma}. \]
The same bound holds for $\sum_{v \leq x} R_{\phi}(v; \fancyB_{\phi}) R_{\sigma}(v; \fancyB_{\sigma})^2$.
\end{lem}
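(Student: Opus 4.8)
The plan is to estimate $\sum_{v \le x} R_\phi(v;\fancyB_\phi)^2 R_\sigma(v;\fancyB_\sigma)$ by expanding it as a count of triples $(n,n',m) \in \fancyB_\phi \times \fancyB_\phi \times \fancyB_\sigma$ with $\phi(n)=\phi(n')=\sigma(m)$, and to extract enough structure from membership in the $\fancyB$'s to show the count is small. First I would fix $v = \phi(n) = \phi(n') = \sigma(m) \le x$ and record its anatomy: by property (v), above $v_k$ the number $v$ is squarefree, and by property (iv) its part below $v_k$ is rigidly controlled (a divisor of $6^k$ or so), so $v$ is essentially determined by its large prime factors. The key point is that for each $i$, the ``level-$i$'' prime $P^+(p_i-1)$ (respectively $P^+(p_i+1)$) is, by property (ii), the \emph{unique} prime factor of $p_i-1$ (resp.\ $p_i+1$) exceeding $v_i^{1/12}$, and it lies in $(v_i^{1/12}, v_i]$; moreover $\phi(n) = \prod (p_i-1)$ and $\sigma(m) = \prod(p_i+1)$, so these large primes appear (to the first power) in $v$. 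Thus from $v$ one reads off, for each level $i$ with $v_i^{1/12}$ large, a prime $q_i \mid v$, and then $p_i$ is forced to be a prime of the form $q_i t + 1$ (for $n$, $n'$) or $q_i t - 1$ (for $m$) with $t$ supported on primes $\le v_i^{1/12}$ — but property (iii) pins down $\Omega$ of these pieces, so $t$ is rather restricted.

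The combinatorial heart is then a standard ``building up from the top'' argument, level by level, as in Ford~\cite{ford98} and Maier--Pomerance~\cite{MP88}. I would count as follows: sum over the largest prime $p_0$ of $n$ (there are $\ll x/v_0 \cdot 1/\log$-type savings from the prime counting, but the real gain is combinatorial), then over $p_1, p_2, \dots$, at each stage using that $p_i-1$ has exactly one prime factor $q_i$ in the relevant dyadic range, that $q_i \mid v$, and that given the already-chosen top part of $v$ there are few choices for $q_i$, hence few choices for $p_i$. The same tree is grown simultaneously for $n'$ and for $m$ (whose primes satisfy $p_i+1 \mid v$ analogously). Property~(iii) controls the number of \emph{small} prime factors at each level, which is exactly the input that produces the $(\log x)^{-1+\eps}$ saving rather than just $(\log x)^{O(1)}$: the entropy of the small-prime part at level $i$ contributes $\rho^i \log_2 x$-type terms, and summing the series $\sum \rho^i$ against the constraint $F(\rho)=1$ forces cancellation of all but a $(\log x)^{o(1)}$ factor beyond the main $1/\log x$. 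Choosing $k$ large (hence $\alpha$ close to $\rho$, though for this lemma only $1/2<\alpha<\rho$ and $k\ge k_0$ are needed) makes the tail contributions $\sum_{i \ge k} \rho^i$ negligibly small, which is why $k_0$ appears.

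The main obstacle I anticipate is the bookkeeping for the overlap between the three trees: $n$, $n'$, and $m$ all must produce the \emph{same} value $v$, so their top-level primes are linked through $p_i - 1 \mid v$ and $p_i + 1 \mid v$, and one must be careful not to overcount the shared information while still extracting a saving from each of the three factors. Concretely, after fixing $v$'s large squarefree part, the primes of $n$ are constrained to divide $\prod(\text{shifts})$, and one has to run the level-by-level sieve so that the count of admissible $(n,n',m)$ given $v$ stays $\ll (\log x)^{o(1)}$ while the count of admissible $v$ stays $\ll x/(\log x)^{1-\eps/2}$; balancing these two and verifying that the $\Omega$-constraint in~(iii) together with $F(\rho)=1$ really yields the stated exponent is the delicate part. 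For the companion sum $\sum_v R_\phi(v;\fancyB_\phi) R_\sigma(v;\fancyB_\sigma)^2$ the argument is identical with the roles of $\phi$ and $\sigma$ (i.e.\ of $p-1$ and $p+1$) interchanged, so no separate work is required beyond noting the symmetry of properties (i)--(v) in that exchange.
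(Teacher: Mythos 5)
Your plan correctly identifies the right starting point (expand the sum as a count of triples $(n,n',m)$ and exploit the $\fancyB$-structure level by level), and it correctly identifies the role of property (iii) in producing the entropy terms that, summed against $F(\rho)=1$, give the logarithmic saving. But there are two genuine gaps where the proposal would not go through as described.

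First, you propose to fix $v$, bound the number of preimage triples per $v$ by $(\log x)^{o(1)}$, and separately bound the number of admissible $v$. The paper does not factorize the count this way, and the per-$v$ preimage bound $(\log x)^{o(1)}$ is neither proved nor obviously true; it is also unnecessary. The paper instead counts triples directly through the iterative identity
\[
\sum_{\sigma_{i-1} \in \mathfrak S_{i-1}} \frac{1}{s_{i-1}} = \sum_{\sigma_i \in \mathfrak S_i} \frac{1}{s_i} \sum_{\substack{\tau_i \in \mathfrak T_i \\ \sigma_i \tau_i \in \mathfrak S_{i-1}}} \frac{1}{t_i},
\]
beginning at the bottom level $i=1$ with a sieve bound (Lemma \ref{lem:sieve}) for the large prime, so the global factor $x/(\log x)^{\dots}$ emerges once, and the sums over higher levels are estimated recursively. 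Committing instead to a $v$-first decomposition would require a lemma you do not have.

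Second, and more importantly, you flag the coordination between $n$, $n'$, and $m$ as ``the main obstacle'' but give no mechanism to handle it, and without one the argument fails. The danger is that when $n$ and $n'$ share many prime factors, the extra factor of $R_\phi$ does not cost you anything combinatorially, and the naive count blows up. The paper's resolution is to fix the overlap set $\I = \{i : p_i = p_i'\}$ at the outset (only $O_k(1)$ choices), carry the characteristic function $\chi$ of $\I^c$ through the estimates, and observe two compensating effects at each level: (a) when $p_{i-1} \ne p_{i-1}'$, the sieve over the top prime saves an extra $(\log x)^{-\alpha^{i-1}}$ factor (recorded as $-\chi(i-1)\alpha^{i-1}$ in the exponent), while (b) when $p_{i-1} = p_{i-1}'$, the shared factorization pieces force $\hat t_{i,j} = \hat t_{i,j}'$ and shrink the entropy count $h(t)$ of triple $i$-fold factorizations. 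The bookkeeping culminates in the inequality
\[
\sum_{i=2}^{k} M(i)\log M(i)\,(\alpha^{i-1}-\alpha^i) \;\leq\; \sum_{i=0}^{k-1} \chi(i)\,\alpha^i,
\qquad M(i) := \#\I^c \cap [0,i-1],
\]
proved by Abel summation and using $F(\alpha) \le 1$, which shows the entropy gain from shared structure is exactly offset by the sieve savings from distinct top primes. This is the crux of why the squared factor $R_\phi^2$ does not cost an extra power of $\log x$, and it is absent from your outline. Without fixing $\I$ and verifying this inequality, the level-by-level count does not close.
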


Note that Hypothesis UL is required for the proof of Lemma \ref{lem:lower}, while Lemma \ref{lem:upper} is unconditional.

\subsection{Technical preliminaries} We collect some technical results that will be used in the proofs of Lemmas \ref{lem:lower} and \ref{lem:upper}. The first concerns the distribution of prime factors in a `typical' factorization of a squarefree number $N$.

\begin{lem}\label{lem:normalfact} Let $N$ be a squarefree natural number with $I$ prime factors. Consider all $i^{I}$ ways of writing $N$ as a product of $i$ natural numbers, say $N = d_1 \cdots d_i,$ where the order of the factors is taken into account. For any $\Delta>0$, the number of such decompositions with
\[ |\omega(d_1) - I/i| \geq \Delta \sqrt{I/i} \]
is at most $\Delta^{-2} i^{I}$, uniformly for $\Delta > 0$.
\end{lem}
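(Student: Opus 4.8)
The plan is to recast the count as a second‑moment (Chebyshev) estimate. First I would identify the $i^{I}$ ordered decompositions $N = d_1\cdots d_i$ with the $i^{I}$ functions $g\colon\{q_1,\dots,q_I\}\to\{1,2,\dots,i\}$, where $q_1,\dots,q_I$ are the prime divisors of $N$: to such a $g$ associate the decomposition with $d_j = \prod_{g(q)=j} q$. Because $N$ is squarefree this assignment is a bijection, and under it $\omega(d_1)$ is exactly $\#g^{-1}(1)$, the number of primes sent to the first slot.

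Next I would put the uniform probability measure on this set of functions, which amounts to choosing $g(q_1),\dots,g(q_I)$ independently and uniformly from $\{1,\dots,i\}$. Then $\omega(d_1) = \sum_{m=1}^{I}\mathbf{1}[g(q_m)=1]$ is a sum of $I$ independent Bernoulli random variables of success probability $1/i$, so $\mathbb{E}\,\omega(d_1) = I/i$ and $\operatorname{Var}\omega(d_1) = I\cdot\tfrac1i\bigl(1-\tfrac1i\bigr)\le I/i$. (Equivalently, and without probabilistic language, the number of decompositions with $\omega(d_1)=t$ is $\binom{I}{t}(i-1)^{I-t}$, and these numbers have mean $I/i$ and variance at most $I/i$ when normalized by $i^{I}$.) Chebyshev's inequality then yields
\[
\Pr\Bigl(\,\bigl|\omega(d_1)-I/i\bigr|\ge \Delta\sqrt{I/i}\,\Bigr)\le \frac{\operatorname{Var}\omega(d_1)}{\Delta^{2}\,(I/i)}\le \frac{1}{\Delta^{2}},
\]
and multiplying through by the total number $i^{I}$ of decompositions gives the claimed bound; the uniformity in $\Delta>0$ is automatic since Chebyshev's inequality holds for all $\Delta>0$.

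There is no real obstacle here — this is a routine large‑deviation/variance estimate. The only point deserving a moment's attention is the squarefree hypothesis on $N$: it is precisely what makes the correspondence between slot‑assignments and ordered factorizations a genuine bijection, so that each decomposition is counted exactly once and the binomial model is exact. Were $N$ allowed prime power factors, a single prime power would have to be placed wholesale into one slot, changing the combinatorics, though a similar (slightly messier) estimate would still go through.
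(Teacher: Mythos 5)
Your proof is correct and is essentially the same as the paper's: the paper views $(\omega(d_1),\dots,\omega(d_i))$ as multinomial on the space of decompositions, notes $\mathbf{E}[\omega(d_1)]=I/i$ and $\operatorname{var}(\omega(d_1))=(I/i)(1-1/i)\le I/i$, and applies Chebyshev, which is precisely your binomial/slot-assignment argument.
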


\begin{proof} Let $\mathbf{X} = (X_1, \dots, X_i)$, where each $X_i = \omega(d_i)$. Viewing $\mathbf{X}$ as a random vector defined on the space of all decompositions of $N$ into $i$ factors, observe that $\mathbf{X}$
 follows a multinomial distribution. The lemma now follows from Chebyshev's inequality, taking into account that $\textbf{E}[X_1]=I/i$ and $\mathrm{var}(X_1) = (I/i)(1-1/i) \leq I/i$.
\end{proof}

The following estimate is well-known from the study of sieve methods (see, e.g., \cite[Theorem 4.2]{HR74}).
\begin{lem}\label{lem:sieve} Suppose $a_1, \ldots, a_h$ are positive integers and
$b_1, \ldots, b_h$ are integers such that
$$
E := \prod_{i=1}^h a_i \prod_{1\le i<j\le h} (a_ib_j-a_jb_i) \ne 0.
$$
Then
\[
\#\{ n\le x: a_in+b_i \text{ prime } (1\le i\le h) \}
\ll_h \frac{x}{(\log x)^h} \prod_{p}
\frac{1-\nu(p)/p}{(1-1/p)^h}
\ll_h \frac{x(\log_2 (E+2))^h}{(\log x)^{h}},
\]
where $\nu(p)$ is the number of solutions of the congruence
$\prod (a_i n+b_i)\equiv 0\pmod{p}$, and
the implied constant may depend on $h$.\end{lem}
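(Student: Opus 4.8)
The plan is to handle the two inequalities in the displayed chain separately: the first is essentially a citation, and the second is a short elementary computation with the singular series. For the first $\ll$ I would simply invoke the Selberg upper bound sieve applied to the sequence $n\mapsto \prod_{i=1}^{h}(a_in+b_i)$, $n\le x$; this is exactly \cite[Theorem 4.2]{HR74}, so there is nothing to add beyond quoting it. For orientation, the mechanism is to sift this sequence by all primes $p\le z$ with $z=x^{1/s}$ for a suitable $s=s(h)$: the main term is $\asymp_h x\prod_{p\le z}(1-\nu(p)/p)\asymp_h x\,\mathfrak{S}/(\log x)^h$, by Mertens' theorem together with the convergence of $\mathfrak{S}:=\prod_p\frac{1-\nu(p)/p}{(1-1/p)^h}$; and an $n$ surviving the sieve either has every $a_in+b_i$ prime, or has $a_in+b_i\le z$ for some $i$, the latter for only $O_h(z)=o(x/(\log x)^h)$ values of $n$. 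Before anything else I would dispose of the degenerate case $\nu(p)=p$ for some prime $p$ (equivalently, $p\mid\prod_i(a_in+b_i)$ for all $n$): then $\mathfrak{S}=0$, the left-hand count is $O_h(1)$, and both bounds hold trivially. So henceforth $\nu(p)<p$ for all $p$; since each linear factor contributes at most one root modulo $p$, and a factor with $p\mid a_i$ contributes none, this forces $\nu(p)\le h$ for every $p$.

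The actual content is the second $\ll$, which reduces to the estimate $\mathfrak{S}\ll_h(\log_2(E+2))^h$. I would split the Euler product for $\mathfrak{S}$ according to whether $p\nmid E$ or $p\mid E$. If $p\nmid E$, then $p\nmid a_i$ for all $i$, so each congruence $a_in+b_i\equiv 0\pmod p$ has exactly one root, and these $h$ roots are pairwise distinct because $p\nmid(a_ib_j-a_jb_i)$; hence $\nu(p)=h$ exactly, which forces $p>h$ (if $p=h$ the local factor vanishes and $\mathfrak{S}=0$). Taking logarithms and using $\log(1-h/p)-h\log(1-1/p)=O(h^2/p^2)$ for $p>2h$, while the finitely many primes in $(h,2h]$ contribute $O_h(1)$ altogether, gives
\[ \prod_{p\nmid E}\frac{1-\nu(p)/p}{(1-1/p)^h}=\exp\!\Bigl(\sum_{p\nmid E}O(h^2/p^2)+O_h(1)\Bigr)\ll_h 1. \]
For the primes $p\mid E$ I would use only the crude bound $1-\nu(p)/p\le 1$, which yields $\prod_{p\mid E}\frac{1-\nu(p)/p}{(1-1/p)^h}\le\prod_{p\mid E}(1-1/p)^{-h}=(E/\phi(E))^h\ll(\log_2(E+2))^h$, by the classical estimate $n/\phi(n)\ll\log_2(n+2)$. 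Multiplying the two ranges together gives $\mathfrak{S}\ll_h(\log_2(E+2))^h$, which is the second inequality.

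I do not anticipate any serious obstacle here: the first inequality is a black box and the second is routine Euler-product bookkeeping. The one point that genuinely needs care is uniformity in the size of $E$: the contribution of the primes $p\mid E$ must be controlled in exactly the shape $(E/\phi(E))^h$, whose extremal case (a primorial) is precisely what dictates that $\log_2(E+2)$, rather than $\log E$, is the right quantity; and one must remember to peel off the small primes — those in $(h,2h]$, together with any degenerate primes — before Taylor-expanding the remaining local factors. Everything else is standard.
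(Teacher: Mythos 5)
The paper gives no proof of this lemma: it simply refers to \cite[Theorem 4.2]{HR74}. Your two-part approach --- quote the Selberg sieve bound for the first inequality, then bound the singular series by an elementary Euler-product splitting --- is the standard way to flesh out such a citation, and the argument for $\mathfrak{S}\ll_h(\log_2(E+2))^h$ is correct: for $p\nmid E$ the $h$ roots are distinct, $\nu(p)=h$, and the local factors multiply to $O_h(1)$; for $p\mid E$ the crude bound $(1-1/p)^{-h}$ and $E/\phi(E)\ll\log_2(E+2)$ finish it.

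Two small points deserve more care than you gave them. First, in your ``orientation'' for the sieve: an $n$ surviving the sieve to level $z$ does \emph{not} split into ``all $a_in+b_i$ prime'' or ``some $a_in+b_i\le z$'' --- the surviving values may simply have all their prime factors above $z$ without being prime. What is true, and what the upper bound sieve actually uses, is the reverse implication: if every $a_in+b_i$ is prime, then either $n$ survives the sieve or some $a_in+b_i\le z$. Since this is only motivational and the first $\ll$ is quoted, it does no harm, but as written the logic is backwards. Second, your dismissal of the degenerate case is too quick: if $\nu(p)=p$ for some $p$ then $\mathfrak{S}=0$, so the \emph{first} displayed bound asserts the count is $\ll 0$, i.e.\ zero --- yet the count can be as large as $h$ (take $n$ with $a_in+b_i=p$ for the offending $i$). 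So ``both bounds hold trivially'' is false as stated; one has to note that \cite[Theorem 4.2]{HR74} carries the hypothesis $\nu(p)<p$ for all $p$ (equivalently, the lemma is to be read with that proviso), which is in any case the only regime in which the paper invokes it. With those caveats, the proof is sound and matches the paper's (implicit) reliance on Halberstam--Richert.
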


The next two lemmas concern the Poisson distribution.
\begin{lem}\label{lem:poisson2} If $z> 0$ and $\Delta >0$, then
\[ \sum_{|k-z| > \Delta z} \frac{z^k}{k!} \leq \Delta^{-2} e^z. \]
\end{lem}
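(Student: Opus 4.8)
This is a standard tail bound for the Poisson distribution, established via Chebyshev's inequality applied to a Poisson random variable with mean $z$. The plan is to recognize $e^{-z} z^k/k!$ as the probability mass function of a Poisson random variable $X$ with parameter $z$, multiply through the claimed inequality by $e^{-z}$, and reinterpret the left-hand side as $\mathbf{P}(|X - z| > \Delta z)$.

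First I would recall the two basic facts about the Poisson distribution with parameter $z$: $\mathbf{E}[X] = z$ and $\mathrm{var}(X) = z$. The first is immediate from $\sum_{k\ge 0} k\, e^{-z} z^k/k! = z e^{-z}\sum_{k\ge 1} z^{k-1}/(k-1)! = z$; the variance computation is the analogous one-line manipulation, using $\mathbf{E}[X(X-1)] = z^2$. Next I would apply Chebyshev's inequality in the form
\[ \mathbf{P}(|X - \mathbf{E}[X]| > t) \leq \frac{\mathrm{var}(X)}{t^2} \]
with $t = \Delta z$, obtaining $\mathbf{P}(|X-z| > \Delta z) \le z/(\Delta z)^2 = 1/(\Delta^2 z)$. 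Multiplying back by $e^z$ would then give the bound $e^z/(\Delta^2 z)$, which is \emph{stronger} than the claimed $\Delta^{-2} e^z$ precisely when $z > 1$; for $z \le 1$ one falls back on the trivial bound, since $\sum_{k} e^{-z} z^k/k! = 1 \le 1/(\Delta^2 z)$ fails in general but $\sum_k e^{-z}z^k/k! = 1$ and we only need $\le \Delta^{-2}$... Actually here I should be a little careful: the clean statement $\Delta^{-2} e^z$ follows because $\mathbf{P}(|X-z| > \Delta z) \le \min\{1, 1/(\Delta^2 z)\}$, and when $z \ge 1$ this is $\le \Delta^{-2}$, while when $z < 1$ we can instead note $\mathbf{P}(|X - z| > \Delta z) \le \mathbf{P}(X \ge 1) \cdot[\Delta z < \dots]$; the simplest uniform route is just $\mathbf{P}(|X-z|>\Delta z) \le \mathrm{var}(X)/(\Delta z)^2$ only when this is the intended reading, but in fact the inequality as stated with $\Delta^{-2}e^z$ on the right is weaker than $\Delta^{-2}$ only when $e^z \ge 1$, which always holds, so after multiplying Chebyshev by $e^z$ we get $e^z/(\Delta^2 z) \le e^z \Delta^{-2}$ exactly when $z \ge 1$, and for $z<1$ we use that the full sum $\sum_k z^k/k! = e^z$ trivially bounds the partial sum, and $e^z \le \Delta^{-2} e^z$ iff $\Delta \le 1$; for $\Delta > 1$ and $z<1$ the constraint $|k-z|>\Delta z$ forces $k = 0$ impossible or $k \ge 2$, so the sum is at most $e^z - 1 - z \le z^2 e^z/2 < \Delta^{-2} e^z$. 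So the cleanest exposition simply invokes Chebyshev directly on the normalized measure.

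The only mild obstacle is the bookkeeping at small $z$ or large $\Delta$, where one must check the claimed form is not vacuous; but since the right-hand side $\Delta^{-2} e^z$ is never smaller than what Chebyshev delivers in the regime that matters, and the trivial bound $\sum_k z^k/k! \le e^z$ handles the rest, there is nothing substantive to prove. I would therefore write the argument in three lines: introduce $X \sim \mathrm{Poisson}(z)$, cite $\mathbf{E}[X] = \mathrm{var}(X) = z$, and conclude
\[ \sum_{|k-z|>\Delta z} \frac{z^k}{k!} = e^z\, \mathbf{P}(|X-z| > \Delta z) \leq e^z \cdot \frac{\mathrm{var}(X)}{(\Delta z)^2} = \frac{e^z}{\Delta^2 z} \leq \Delta^{-2} e^z \quad (z \geq 1), \]
handling $0 < z < 1$ separately by the trivial estimate. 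This parallels exactly the proof of Lemma \ref{lem:normalfact} via Chebyshev and the multinomial distribution, so stylistically it fits the surrounding material.
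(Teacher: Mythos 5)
Your core idea---normalize by $e^{-z}$, recognize the Poisson mass function, and invoke Chebyshev with $\mathbf{E}[X]=\mathrm{var}(X)=z$---is exactly the paper's (one-sentence) proof. But your patch for the regime $z<1$, $\Delta>1$ contains an error: you claim that $|k-z|>\Delta z$ then forces $k\geq 2$, so that the sum is at most $e^z-1-z$. This is false. The integer $k=1$ satisfies $|1-z|=1-z>\Delta z$ whenever $z<1/(1+\Delta)$, so $k=1$ can be in the range of summation, and the estimate $e^z-1-z\leq z^2e^z/2$ does not apply. Your difficulty here is a symptom of something deeper: the lemma is actually \emph{false} as printed. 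Take $z=0.3$ and $\Delta=2$: the set $\{k\geq 0: |k-0.3|>0.6\}$ is $\{1,2,3,\dots\}$, so the left-hand side equals $e^{0.3}-1\approx 0.3499$, whereas $\Delta^{-2}e^z=e^{0.3}/4\approx 0.3375$.

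What is surely intended---and what the one-line Chebyshev argument delivers with no case-splitting whatsoever---is the bound over the range $|k-z|>\Delta\sqrt{z}$. Chebyshev then gives $\mathbf{P}(|X-z|>\Delta\sqrt{z})\leq z/(\Delta\sqrt{z})^2=\Delta^{-2}$, and multiplying by $e^z$ finishes the proof, valid for all $z>0$. This is also the form that is actually used in the paper: in the display \eqref{eq:treciplower2} the lemma is invoked with threshold $\frac{k}{2}\sqrt{S}$ and yields the bound $\frac{4}{k^2}\exp(S)$, which corresponds to $\Delta=k/2$ in the $\Delta\sqrt{z}$ normalization, not to $\Delta=\frac{k}{2\sqrt{S}}$ (which would produce an extra factor of $S$). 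With the corrected statement your approach is correct, complete, and identical to the paper's; none of the small-$z$ bookkeeping that derailed the write-up is needed.
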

\begin{proof} This follows immediately from Chebyshev's inequality, once we recall that the Poisson distribution with parameter $z$ has mean and variance both equal to $z$.
\end{proof}

\begin{lem}[see e.g. {\cite[Lemma 2.1]{ford98}}]\label{lem:poisson1} If $z > 0$ and $0 < \alpha < 1 < \beta$, then
\[ \sum_{k \leq \alpha z} \frac{z^k}{k!} < \left(\frac{e}{\alpha}\right)^{\alpha z} \quad \text{and}\quad \sum_{k \geq \beta z} \frac{z^k}{k!} < \left(\frac{e}{\beta}\right)^{\beta z}. \]
\end{lem}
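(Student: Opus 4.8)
The statement to prove is Lemma~\ref{lem:poisson1}, the two-sided Poisson tail estimate. Here is my plan.

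\medskip

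The plan is to prove both inequalities by the standard exponential-moment (Chernoff) method, exploiting the fact that $\sum_{k\ge 0} z^k/k! = e^z$ and that we can weight the terms by $\lambda^k$ for a suitable $\lambda$. For the left tail, fix a parameter $0<\lambda<1$ (to be chosen) and observe that for every $k\le\alpha z$ we have $\lambda^{k}\ge\lambda^{\alpha z}$, since $\lambda<1$ and $k\le\alpha z$. Hence
\[
\sum_{k\le\alpha z}\frac{z^k}{k!}\le\lambda^{-\alpha z}\sum_{k\le\alpha z}\frac{(\lambda z)^k}{k!}\le\lambda^{-\alpha z}\sum_{k\ge 0}\frac{(\lambda z)^k}{k!}=\lambda^{-\alpha z}e^{\lambda z}.
\]
Now optimize the bound $\lambda^{-\alpha z}e^{\lambda z}=\exp(z(\lambda-\alpha\log\lambda))$ over $\lambda\in(0,1)$: the exponent $g(\lambda)=\lambda-\alpha\log\lambda$ has $g'(\lambda)=1-\alpha/\lambda$, which vanishes at $\lambda=\alpha$ (and this is indeed in $(0,1)$ since $0<\alpha<1$), giving minimum value $g(\alpha)=\alpha-\alpha\log\alpha=\alpha(1-\log\alpha)$. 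Therefore the sum is at most $\exp(z\alpha(1-\log\alpha))=(e/\alpha)^{\alpha z}$, which is the first claimed bound. The inequality is strict because the partial sum $\sum_{k\le\alpha z}(\lambda z)^k/k!$ is strictly less than the full series $e^{\lambda z}$ (the tail $k>\alpha z$ contributes positive terms since $z>0$).

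\medskip

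For the right tail the argument is symmetric but now we pick $\lambda>1$: for every $k\ge\beta z$ we have $\lambda^{k}\ge\lambda^{\beta z}$ since $\lambda>1$ and $k\ge\beta z$, so
\[
\sum_{k\ge\beta z}\frac{z^k}{k!}\le\lambda^{-\beta z}\sum_{k\ge\beta z}\frac{(\lambda z)^k}{k!}\le\lambda^{-\beta z}e^{\lambda z}=\exp(z(\lambda-\beta\log\lambda)).
\]
Minimizing $\lambda-\beta\log\lambda$ over $\lambda>1$ by the same calculus gives the optimal choice $\lambda=\beta$, which lies in $(1,\infty)$ because $\beta>1$, and yields the value $\beta(1-\log\beta)$; hence the sum is at most $\exp(z\beta(1-\log\beta))=(e/\beta)^{\beta z}$. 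Again strictness follows because in the step $\sum_{k\ge\beta z}(\lambda z)^k/k!\le e^{\lambda z}$ we are discarding the strictly positive terms with $k<\beta z$ (note $\beta z>0$, so there is at least the $k=0$ term).

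\medskip

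There is essentially no obstacle here: the only mild points to watch are that the optimizing values $\lambda=\alpha$ and $\lambda=\beta$ genuinely fall in the allowed ranges $(0,1)$ and $(1,\infty)$ respectively — which is exactly guaranteed by the hypotheses $0<\alpha<1<\beta$ — and that we are justified in asserting the inequalities are strict rather than merely $\le$. Both are immediate. One could alternatively cite this as a classical fact, but since the proof is two lines it is cleanest to include it; indeed the excerpt already attributes the statement to \cite[Lemma 2.1]{ford98}, so it may suffice simply to refer there.
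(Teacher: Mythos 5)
Your Chernoff-style proof is correct, and it matches the standard argument (and, in substance, the one in Ford's cited Lemma 2.1) — the paper itself does not reprove the lemma but simply cites \cite[Lemma 2.1]{ford98}. The details, including the check that the optimizers $\lambda=\alpha$ and $\lambda=\beta$ lie in the required ranges and that the inequality is strict because a nonempty set of positive Poisson terms is discarded, are all in order.
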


\subsection{Proof of Lemma \ref{lem:lower}}  Suppose that $n = p_0 \cdots p_{k-1}$ and $m = q_0 \cdots q_{k-1}$ are squarefree numbers satisfying $\phi(n)=\sigma(m) \in [1,x]$, where the primes are ordered so that
\[ p_0 > p_1 > \dots > p_{k-1} \quad \text{and}\quad q_0 > q_1 >\dots > q_{k-1}. \]
Then
\begin{equation}\label{eq:coincidence}
 (p_0-1)(p_1-1)\cdots(p_{k-1}-1) = (q_0+1)(q_1+1)\cdots(q_{k-1}+1).\end{equation}
We consider separately the prime factors of each shifted prime lying in
each interval $(v_i,v_{i-1}]$, where $v_i = \exp((\log{x})^{\alpha^i})$. For $0\le j \le k-1$ and $0\le i\le k$, let
\[
s_{i,j} := \prod_{\substack{ p^a\parallel (p_j-1) \\p\le v_i }} p^a, \qquad
s'_{i,j} := \prod_{\substack{ p^a\parallel (q_j+1) \\p\le v_i }} p^a, \qquad
s_i := \prod_{j=0}^{k-1} s_{i,j} = \prod_{j=0}^{k-1} s'_{i,j}.
\]
Also, for $0\le j\le k-1$ and $1\le i\le k$, let
\[
t_{i,j} := \frac{s_{i-1,j}}{s_{i,j}}, \qquad t'_{i,j} := \frac{s'_{i-1,j}}{s'_{i,j}},
\qquad t_i := \prod_{j=0}^{k-1} t_{i,j} =\prod_{j=0}^{k-1} t'_{i,j}.
\]
Let
\begin{align}
\sigma_i &= \{s_i;s_{i,0},\ldots,
s_{i,k-1}; s_{i,0}', \ldots,
s_{i,k-1}'\},\label{eq:sigmaiform}\\
\tau_i &= \{t_i; t_{i,0},\ldots,t_{i,k-1};
t'_{i,0},\ldots,t'_{i,k-1}\}.\label{eq:tauiform}
\end{align}
Observe that if we define multiplication of $(2k+1)$-tuples component-wise, then
we have
\be\label{sigtau}
\sigma_{i-1} = \sigma_i \tau_i.
\ee

Suppose we are given a collection of squarefree solutions $(n,m)$ to \eqref{eq:coincidence} for which $m,n\le x$. Let $\mathfrak S_i$ denote the set of $\sigma_i$ that arise from these solutions, and let $\mathfrak T_i$ denote the corresponding set of $\tau_i$. For $1 \leq i \leq k$, let
\[ \mathfrak U_i:= \{(\sigma,\tau): \sigma \in \mathfrak S_i, \tau \in \mathfrak T_i, \sigma \tau \in \mathfrak S_{i-1}\}. \]
The given set of solutions $(n,m)$ is in one-to-one correspondence with the set $\mathfrak S_0$, since
\[ \sigma_0 = (\phi(n); p_0-1,\dots,p_{k-1}-1, q_0+1, q_1+1, \dots, q_{k-1}+1) \]
both determines the pair $(n, m)$ and is determined by it. Also, from \eqref{sigtau} we see that the set $\mathfrak S_0$ is completely determined once we know $\mathfrak S_k$ and each of the sets $\mathfrak U_k, \mathfrak U_{k-1}, \dots, \mathfrak U_{1}$. To construct a set of solutions, we can reverse the process, first picking a set $\mathfrak{S}_k$ and then successively constructing $\mathfrak{U}_k, \dots, \mathfrak{U}_1$. We carry out this plan, verifying that $(n,m) \in \fancyB_{\phi}\times \fancyB_{\sigma}$ for all the solutions constructed in this way.

%
%
%

We begin by putting $\mathfrak S_k:=\{\sigma\}$, where $\sigma:= (6^k; 6, \dots, 6; 6, \dots, 6)$.

Suppose that $\mathfrak S_i$ has been determined, where $2 \leq i \leq k$. For each $\sigma_i \in \mathfrak S_i$, write $\sigma_i$ in the form \eqref{eq:sigmaiform}. As part of the induction hypothesis, suppose that each $\sigma_i$ satisfies
\begin{equation}\label{eq:wellformed1} s_i = \prod_{j=0}^{k-1}{s_{i,j}} = \prod_{j=0}^{k-1}{s_{i,j}'}, \end{equation}
\begin{equation}\label{eq:wellformed2}\max_{0 \leq j \leq k-1}\{s_{i,j}, s_{i,j}'\} \leq v_i. \end{equation}
Moreover, suppose also that for $j=i, i+1, \dots, k-1$, we have
\begin{equation}\label{eq:twoprimes}
 p_j:= s_{j,j}+1 \quad\text{and} \quad q_j := s_{j,j}-1\end{equation}
all prime. Clearly all of these hypotheses hold when $i=k$ (the last condition being vacuous).

Now we construct $\mathfrak U_i$ and so determine $\mathfrak S_{i-1}$. Let  $t_i^{\ast}$ range over all numbers satisfying
\begin{enumerate}
\item $t_i^{\ast}$ is squarefree,
\item every prime divisor of $t_i^{\ast}$ belongs to $(v_i, v_{i-1}^{\frac{1}{12\log_2{x}}}]$,
\item $t_i^{\ast}$ has exactly $N_i:= \lfloor i(\a^{i-1}-\a^i)\log_2{x}\rfloor$ prime divisors,
\end{enumerate}
and suppose that the variables $t_{i,0}, \dots, t_{i, i-2}, t_{i,0}', \dots, t_{i,i-2}', u_i, u_i'$ range over all (ordered) dual factorizations of $t_i^{\ast}$ of the shape
\begin{equation}\label{eq:dualfact}
 t_i^{\ast} = t_{i,0} \dots t_{i,i-2} u_i = t_{i,0}' \cdots t_{i,i-2}' u_i'
\end{equation}
for which each of the variables $t_{i,j},t'_{i,j},u_i,u_i'$ satisfies
\begin{equation}\label{eq:tiprimes}
 |\Omega(\cdot, v_i, v_{i-1}) - (\a^{i-1}-\a^i)\log_2{x}| < k \sqrt{(\a^{i-1}-\a^i)\log_2{x}}. \end{equation}
Let $Q_i$ range over all primes in the interval
\begin{equation}\label{eq:Qdefs0} v_{i-1}^{1/12} < Q_i \leq v_{i-1}^{1/6} \end{equation}
for which also
\begin{equation}\label{eq:Qdefs}
p_{i-1}:=s_{i,i-1} u_i Q_i +1\quad \text{and} \quad q_{i-1}:=s_{i,i-1}'u_i' Q_i -1
\end{equation}
are prime. We put $t_{i,i-1} := u_iQ_i$, $t_{i,i-1}' := u_i'Q_i$, and $t_i:=t_i^{\ast}Q_i$ (so that $t_i=\prod_{j}{t_{i,j}}=\prod_{j}{t_{i,j}'}$) and we add to $\mathfrak U_{i}$ all pairs of the form $(\sigma_i, \tau_i)$, where
\[ \tau_i:= (t_i; t_{i,0}, \dots, t_{i,i-1}, 1, \dots, 1; t_{i,0}', \dots, t_{i,i-1}', 1, \dots, 1). \]
For the set $\mathfrak{S}_{i-1} = \{\sigma_i \tau_i: (\sigma_i, \tau_i) \in \mathfrak U_i\}$ determined this way, our induction hypotheses
\eqref{eq:wellformed1}-\eqref{eq:twoprimes} continue to hold. Indeed, \eqref{eq:wellformed1} and
\eqref{eq:twoprimes} hold by construction. To verify \eqref{eq:wellformed2} for $i-1$ in place of $i$, observe that if $j \neq i-1$, then
\[ s_{i-1,j} = s_{i,j} t_{i,j} \leq v_i \left((v_{i-1})^\frac{1}{12 \log_2{x}}\right)^{\Omega(t_{i,j})}<  v_i v_{i-1}^{1/6} < v_{i-1} \]
for large $x$, by our induction hypothesis and the inequality $\Omega(t_{i,j}) \leq N_i \leq 2\log_2{x}$.
(Throughout this proof, the meaning of ``large'' $x$ is allowed to depend on $\alpha$ and $k$.)
If $j=i-1$, then
\[ s_{i-1,j} = s_{i,j} t_{i,j} \leq v_i u_i Q_i \leq v_i v_{i-1}^{1/6} v_{i-1}^{1/6} < v_{i-1}. \]
If $j > i-1$, then $s_{i-1,j} = s_{i,j}$, and so $s_{i-1,j} \leq v_i \leq v_{i-1}$. Analogous estimates hold
for $s_{i-1,j}'$ in place of $s_{i-1,j}$, giving \eqref{eq:wellformed2}.

At this stage we have determined all of $\mathfrak{S}_k, \dots, \mathfrak{S}_{1}$.
It remains to construct $\mathfrak U_1$ and so determine $\mathfrak S_0$.
Let $\sigma_1 \in \mathfrak S_1$, and write $\sigma_1$ in the form
\[ \sigma_1 = (s_1; s_{1,0}, p_1-1, \dots, p_{k-1}-1; s_{1,0}', q_1+1, \dots, q_{k-1}+1). \]
Let $t'$ range over all natural numbers satisfying
\begin{enumerate}
\item $t'$ is squarefree,
\item $t' \leq x^{1/3}/s_1$,
\item every prime dividing $t'$ belongs to $(v_1, v_0]$,
\item $|\Omega(t',v_1,v_0)-(1-\a)\log_2{x}| \leq k\sqrt{(1-\a)\log_2{x}}$.
\end{enumerate}
For each $t'$, let $Q_1$ range over primes with
\begin{equation}\label{eq:Qdef2} x^{1/2} \leq Q_1 \leq \frac{x}{s_1 t'} \quad\text{for which}\quad p_0:= s_{1,0}t' Q_1+1, \quad q_0:= s_{1,0}'t'Q_1-1 \text{ are prime}.
\end{equation}
We set $t_1 := t'Q_1$, and let $\mathfrak U_1$ consist of all tuples of the form $(\sigma_1, \tau_1)$, where
\[ \tau_1 := (t_1; t_1, 1, 1, \dots, 1; t_1, 1, 1, \dots, 1). \]
Finally, we put $\mathfrak S_0 = \{\sigma_1 \tau_1: (\sigma_1, \tau_1) \in \mathfrak U_1\}$.

The remainder of the proof consists of verifying that the set $\mathfrak S_0$ determined by this construction is as large
as claimed and that the solutions corresponding to the elements of $\mathfrak S_0$ belong to $\fancyB_{\phi} \times\fancyB_{\sigma}$. The lower bound for $\#\mathfrak S_0$ will be made to depend on a lower bound for $\sum_{\sigma_1 \in \mathfrak S_1} 1/s_1$. First, observe that
\begin{equation}\label{eq:sklower} \sum_{\sigma_1 \in \mathfrak S_k} \frac{1}{s_k} = \frac{1}{6^k} \gg_{k} 1. \end{equation}
For $2\leq i \leq k$, we have
\be\label{eq:reciplower}
 \sum_{\sigma_{i-1} \in \mathfrak S_{i-1}} \frac{1}{s_{i-1}} = \sum_{\sigma_i \in \mathfrak S_i}\frac{1}{s_i}\sum_{\tau_i:~(\sigma_i, \tau_i)\in \mathfrak U_i} \frac{1}{t_i} = \sum_{\sigma_i \in \mathfrak
 S_i}\frac{1}{s_i}\sum_{t_i^{\ast}} \frac{h(t_i^{\ast})}{t_i^{\ast}}\sum_{Q_i}\frac{1}{Q_i}.
\ee
Here $t_i^{\ast}$, $t_i$, and $Q_i$ are as the quantities appearing in the description of $\mathfrak{U_i}$ for $2 \leq i \leq k$ (see \eqref{eq:dualfact}--\eqref{eq:Qdefs}), and $h(t_i^{\ast})$ is the number of dual factorizations of $t_i^{\ast}$ of the form \eqref{eq:dualfact}, where the factors $u_i, u_i', t_{i,j}, t_{i,j}'$ all satisfy \eqref{eq:tiprimes}.

By our choice of $\mathfrak S_k$, we have that $6$ divides both $s_{i,i-1}$ and $s_{i,i-1}'$. It follows that the singular series corresponding to the affine linear forms $T, s_{i,i-1} u_i T +1$, and $s_{i,i-1}' u_i' T -1$ is bounded away from zero (using $\nu(p)\le \min(p-1,3)$). Moreover, all the coefficients of these forms are bounded by $v_{i-1}$. Hence Hypothesis UL and partial summation shows that for the inner sum in \eqref{eq:reciplower},
\begin{equation}\label{eq:Qlower} \sum_{Q_i} \frac{1}{Q_i} \gg \frac{1}{\log^2{v_{i-1}^{1/12}}} \gg \frac{1}{(\log{x})^{2\a^{i-1}}}, \end{equation}
where the implied constant is absolute. Inserting this estimate in \eqref{eq:reciplower}, we find that
\begin{equation}\label{eq:reciplower2.0} \sum_{\sigma_{i-1} \in \mathfrak S_{i-1}} \frac{1}{s_{i-1}} \gg \frac{1}{(\log{x})^{2\a^{i-1}}} \sum_{\sigma_i \in \mathfrak S_i}\frac{1}{s_i}\sum_{t_i^{\ast}} \frac{h(t_i^{\ast})}{t_i^{\ast}}. \end{equation}

Recalling the definition of $h(\cdot)$, we see that for each $t_i^{\ast}$,
\begin{align*} h(t_i^{\ast}) &\geq \#\{\text{dual $i$-fold factorizations of $t_i^{\ast}$}\} - \#\{\text{dual $i$-fold factorizations of $t_i^{\ast}$ failing \eqref{eq:tiprimes}}\} \\ &\geq i^{2N_i} - 2i^{N_i}h'(t_i^{\ast}),\end{align*}
where $h'(t_i^{\ast})$ is the number of (single) $i$-fold decompositions of $t_i^{\ast}$ where (at least) one of the factors fails to satisfy \eqref{eq:tiprimes}. By Lemma \ref{lem:normalfact}, we have $h'(t_i^{\ast}) \ll i (i^{N_i}/k^2) \leq i^{N_i}/k$, and thus $h'(t_i^{\ast}) < i^{N_i}/4$, assuming (as we may) that $k$ is sufficiently large. Hence
\begin{equation}\label{eq:hlower}
 h(t_i^{\ast}) \geq \frac{1}{2}i^{2N_i}, \end{equation}
uniformly in $t_i^{\ast}$. Moreover, by the multinomial theorem, if we put
\[ S:= \sum_{v_i < p \leq v_{i-1}^{\frac{1}{12\log_2{x}}}} \frac{1}{p} \quad\text{and}\quad S':=\sum_{v_i < p \leq v_{i-1}}\frac{1}{p^2}, \]
then
\begin{equation}\label{eq:reciplower2}
 \sum_{t_i^{\ast}}\frac{1}{t_i^{\ast}} \geq \frac{S^{N_i}}{N_i!}  - \frac{S^{N_i-2}S'}{(N_i-2)!} \geq \frac{S^{N_i}}{N_i!}\( 1-O(N_i^2 S'/S^2)\) \geq \frac{1}{2} \frac{S^{N_i}}{N_i!}
\end{equation}
for large $x$.
Combining the results of  \eqref{eq:hlower} and \eqref{eq:reciplower2} with \eqref{eq:reciplower2.0}, we find that
\begin{align} \notag\sum_{\sigma_{i-1} \in \mathfrak S_{i-1}} \frac{1}{s_{i-1}} &\gg \frac{1}{(\log{x})^{2\a^i}} \frac{(i^2 S)^{N_i}}{N_i!} \sum_{\sigma_i \in \mathfrak{S}_i}\frac{1}{s_i} \\&\gg \frac{1}{(\log{x})^{2\a^i}} \frac{1}{\sqrt{N_i}} \left(\frac{ei^2 S}{N_i}\right)^{N_i} \sum_{\sigma_i \in \mathfrak{S}_i}\frac{1}{s_i}. \label{eq:lowersim1} \end{align}
(Here we have used Stirling's formula to estimate $N_i!$.) A routine computation, making use of the estimates
\[ S= (\a^{i-1}-\a^i)\log_2{x} + O(\log_3{x}), \quad\quad N_i = i(\a^{i-1}-\a^i)\log_2{x} + O(1), \]
shows that as $x\to\infty$,
\[ \sum_{\sigma_{i-1} \in \mathfrak S_{i-1}} \frac{1}{s_{i-1}} \geq (\log{x})^{(\a^{i-1}-\a^i)(i+i\log{i})-2\a^{i-1} + o_k(1)} \sum_{\sigma_i \in \mathfrak S_i}\frac{1}{s_i}. \]
Starting with \eqref{eq:sklower} and then descending from $i=k$ down to $i=2$, we obtain  that
\[ \sum_{\sigma_1\in \mathfrak S_1} \frac{1}{s_1} \geq (\log{x})^{\sum_{i=2}^{k}\left((\a^{i-1}-\a^i)(i+i\log{i}) -2\a^{i-1}\right) + o_k(1)}. \]

Letting $t'$ and $Q_1$ denote the quantities appearing in the definition of $\mathfrak{U}_1$, we have from Hypothesis UL and the above lower bound on $\sum 1/s_1$ that
\begin{multline}
 \#\mathfrak S_0 = \sum_{\sigma_1 \in \mathfrak S_1} \sum_{\tau_1: (\sigma_1, \tau_1) \in \mathfrak U_1}1
 =\sum_{\sigma_1 \in \mathfrak S_1} \sum_{t'} \sum_{Q_1} 1 \gg \sum_{\sigma_1 \in \mathfrak S_1} \sum_{t'} \frac{x}{s_1 t'\log^3{x}}\\= \frac{x}{\log^3{x}} (\log{x})^{\sum_{i=2}^{k}\left((\a^{i-1}-\a^i)(i+i\log{i}) -2\a^{i-1}\right) + o_k(1)} \sum\frac{1}{t'}.\label{eq:lowers}
\end{multline}
(Note that $\max\{s_{1,0}t',s_{1,0}'t'\} \leq v_1 t' \leq x^{1/3}$ and that $6$ divides both $s_{1,0}$ and $s_{1,0}'$.) We now estimate  $\sum 1/t'$ from below.  Let us temporarily ignore the restriction (d) on $\Omega(t')$, and for brevity write
$T=\{t'\le x^{1/3}/s_1:t' \text{ squarefree}, p|t'\implies
v_1 < p \le v_0 \}$.
Then for large $x$,
\[
\sum_{t'\in T}\frac{1}{t'} \prod_{p \leq v_1}\left(1+\frac{1}{p}\right) \geq \sum_{t' \leq x^{1/3}/s_1, \text{ squarefree}} \frac{1}{t'} \gg \log(x^{1/3}/s_1) \gg \log{x}.
\]
(Note that $s_1 \leq v_1^{k} = x^{o_k(1)}$.) So by Mertens' theorem,
\begin{equation}\label{eq:treciplower1}
\sum_{t'\in T} \frac{1}{t'} \gg \frac{\log{x}}{\log{v_1}} = (\log{x})^{1-\a}.
\end{equation}
To obtain a corresponding lower bound incorporating (d), we show those $t'$ for which (d) is violated make a negligible contribution to \eqref{eq:treciplower1}. Redefine
\[
S:= \sum_{v_1 < p \leq v_0} \frac{1}{p} = (1-\a)\log_2{x}+O(1),
\]
and observe that by Lemma \ref{lem:poisson2}, for large $x$,
\begin{align}\notag
\sum_{\substack{t'\in T \\ |\Omega(t')-(1-\a)\log_2{x}| \geq k\sqrt{(1-\a)\log_2{x}}}}\frac{1}{t'} &\leq \sum_{\substack{t'\in T \\ |\Omega(t')-S| \geq \frac{1}{2}k\sqrt{S}}}\frac{1}{t'}\\
&\leq \sum_{|j-S| \geq \frac{k}{2}\sqrt{S}}\frac{S^j}{j!} \leq \frac{4}{k^2} \exp(S) \ll \frac{1}{k^2} (\log{x})^{1-\a}. \label{eq:treciplower2}
\end{align}
So assuming that $k$ is large, we have from \eqref{eq:treciplower1} and \eqref{eq:treciplower2} that for the final sum in \eqref{eq:lowers},
\[ \sum\frac{1}{t'} \gg (\log{x})^{1-\a}.\]
So by \eqref{eq:lowers}, we have that as $x\to\infty$,
\[ \#\mathfrak S_0 \geq \frac{x}{(\log{x})^{2+\a- \sum_{i=2}^{k}((i\log{i}+i)(\a^{i-1}-\a^i)-2\a^{i-1})+o_k(1)}}.\]
Ignoring the $o_k(1)$ term, the exponent on $\log{x}$ in the denominator simplifies under Abel summation to
\[ 2-\sum_{i=1}^{k-1} a_i \alpha^i + (k\log{k}+k) \alpha^k = 2-F(\alpha) + O((k\log k) \alpha^k). \]
Using \eqref{eq:rhodef}, we can now fix $\alpha_0 \in (1/2, \rho)$ with $F(\alpha_0) > 1-\epsilon/2$. Then if we begin the argument with $\alpha > \alpha_0$ and $k$  large enough (say $k > k_0$), we find that
\begin{equation}\label{eq:s0lower} \#\mathfrak S_0 > \frac{x}{(\log{x})^{1+\epsilon}} \end{equation}
once $x$ is large.

It remains to show that the elements of $\mathfrak S_0$ correspond to solutions $(n,m) \in \fancyB_{\phi}\times\fancyB_{\sigma}$ to $\phi(n)=\sigma(m)$. Let $\sigma \in \mathfrak S_0$, and write $\sigma$ in the form
\[ \sigma = (s_0; p_0-1, \dots, p_{k-1}-1; q_0+1, \dots, q_{k-1}+1).\]
We associate to $\sigma$ the pair $(n,m)$, where $n := p_0\cdots p_{k-1}$ and $m := q_0 \cdots q_{k-1}$.
At this point, we know that
\[ \prod{(p_i-1)} = s_0 = \prod{(q_j+1)}, \]
but we cannot conclude (yet) that $\phi(n) = \sigma(m)$, because we have not proved that $n$ and $m$ are squarefree. This is, of course, contained in showing that $(n,m)\in \fancyB_{\phi}\times\fancyB_{\sigma}$, and so we now turn to that proof. It will be enough to show that $n \in \fancyB_{\phi}$, since the proof that $m \in \fancyB_{\sigma}$ is entirely analogous.

We first establish properties (i) and (ii) in the definition of $\fancyB_{\phi}$. From \eqref{eq:Qdef2}, we have
\[ p_0 -1 \geq P^{+}(p_0-1) = Q_1 \geq x^{1/2} > v_0^{1/12} \]
and (in the notation of \eqref{eq:Qdef2})
\[ p_0-1  = s_{1,0} t' Q_1 \leq s_1 t' Q_1 \leq x = v_0. \]
Also, for $2 \leq i \leq k$, we have
\[ p_{i-1}-1 = s_{i-1,i-1} \leq v_{i-1} \]
and, in the notation used to define $\mathfrak U_2, \dots, \mathfrak U_k$,
\[ p_{i-1}-1 \geq P^{+}(p_{i-1}-1) = Q_i > v_{i-1}^{1/12}, \]
using \eqref{eq:Qdefs0}. Thus, for each $1 \leq i \leq k$,
\[ p_{i-1}-1 > v_{i-1}^{1/12} > v_i \geq p_{i}-1, \]
which shows that $p_0 > p_1 > \dots > p_{k-1}$ and so proves (i). The only statement of (ii) not shown above is that $P^{+}(p_{i-1}-1)$ is the unique prime divisor of $p_{i-1}-1$ exceeding $v_{i-1}^{1/12}$, for $2 \leq i \leq k$. In fact, any prime $p$ dividing $p_i-1$ other than $P^{+}(p_i-1)$ satisfies (in the notation of \eqref{eq:Qdefs})
\[ p \leq P^{+}(s_{i,i-1}u_i) \leq \max\{s_{i,i-1}, P^{+}(u_i)\} \leq \max\{v_i, v_{i-1}^\frac{1}{12\log_2{x}}\} = v_{i-1}^\frac{1}{12\log_2{x}} < v_{i-1}^{1/12}. \]
So we have (ii). Property (iv) follows from the definition of $\mathfrak S_k$ and the observation that each $\tau_i$ has all of its components supported on primes $> v_i \geq v_k$. To see (v), notice that the part of $\phi(n)$ supported on primes $> v_k$ can be written as the first component of $\tau_k \cdots \tau_1$, so as the $k$-fold product
\[ t_{k} t_{k-1} \cdots t_1.\]
But in our construction, the $k$ factors appearing here are squarefree and supported on pairwise disjoint sets of primes. Lastly we turn to (iii): If $i=1$, then also $j=1$, and in the notation used to define $\mathfrak U_1$, we have
\[ \Omega(p_0-1, v_1,  v_0) = \Omega(s_{1,0}t'Q_1,v_1, v_0) = \Omega(t'Q_1) = 1 + \Omega(t'); \]
the result (ii) in this case follows from (d) in the definition of $t'$. If $2 \leq i \leq k$, and $j=i$, then (in the notation used to define $\mathfrak U_2, \dots, \mathfrak U_k$)
\begin{align*} \Omega(p_{j-1}-1, v_i, v_{i-1}) &= \Omega(s_{i,i-1} t_{i,i-1}, v_i, v_{i-1}) \\&= \Omega(t_{i,i-1}) = \Omega(u_iQ_i) = 1 + \Omega(u_i),\end{align*}
and the result follows from \eqref{eq:tiprimes}. If $2 \leq i \leq k$ and $j < i$, then (in the same notation)
\[ \Omega(p_{j-1}-1,v_i, v_{i-1}) = \Omega(t_{i,j-1}), \]
and the result again follows from \eqref{eq:tiprimes}.  This completes the proof that $n \in \fancyB_{\phi}$.

Finally, notice that distinct $\sigma \in \mathfrak S_0$ induce distinct solutions $(n,m)$, by unique factorization. Thus, the number of solutions $(n,m)$ to $\phi(n)=\sigma(m)$ which we find in this way is precisely $\#\mathfrak S_0$, and the lemma follows from \eqref{eq:s0lower}.


\subsection{Proof of Lemma \ref{lem:upper}} Suppose $\alpha, k$, and $x$ are given. Take a solution $(n,n',m) \in \fancyB_{\phi}\times\fancyB_{\phi}\times\fancyB_{\sigma}$ to $\phi(n)=\phi(n')=\sigma(m)$.
Write $n = \prod_{i=0}^{k-1}p_i$, $n' = \prod_{i=0}^{k-1} p_i'$, and $m=\prod_{i=0}^{k-1}q_i$, where the $p_i$, $p_i'$, and $q_i$ are decreasing.
Put
\[ \I= \{0 \leq i \leq k-1: p_i = p_i'\}, \quad\text{so that}\quad \gcd(n,n')=\prod_{i \in \I}p_i. \]
Given $k$, there are only $O_k(1)$ possibilities for $\I$, and so we may (and do) carry out all the estimates below
assuming that $\I$
is fixed. We have
\be\label{ppq}
 (p_0-1)\cdots(p_{k-1}-1) = (p_0'-1)\cdots(p_{k-1}'-1) = (q_0+1)\cdots(q_{k-1}+1).
\ee
For each nonnegative integer $i$, let $v_i := \exp((\log{x})^{\a^i})$. We consider separately
the prime factors of each shifted prime lying in each interval $(v_i, v_{i-1}]$. For $0 \leq j \leq k-1$ and $0\leq i \leq k$, let
\[ s_{i,j}(n) := \prod_{\substack{p^a \parallel p_i-1 \\ p\leq v_i}}p^a, \quad s_{i,j}'(n) := \prod_{\substack{p^a \parallel p_i'-1 \\ p\leq v_i}}p^a,\quad s_{i,j}''(n) := \prod_{\substack{p^a \parallel q_i+1 \\ p\leq v_i}}p^a, \]
and put
\[ s_{i} := \prod_{j=0}^{k-1} s_{i,j} = \prod_{j=0}^{k-1} s_{i,j}' = \prod_{j=0}^{k-1} s_{i,j}''. \]
Also, for $0 \leq j \leq k-1$, let
\[ t_{i,j} := \frac{s_{i-1,j}}{s_{i,j}}, \quad t_{i,j}' := \frac{s_{i-1,j}'}{s_{i,j}'}, \quad t_{i,j}'' := \frac{s_{i-1,j}''}{s_{i,j}''}, \]
and put
\[ t_i := \prod_{j=0}^{k-1} t_{i,j} = \prod_{j=0}^{k-1} t_{i,j}' = \prod_{j=0}^{k-1} t_{i,j}''. \]
For each solution $(n,n',m) \in \fancyB_{\phi}\times\fancyB_{\phi}\times \fancyB_{\sigma}$ to $\phi(n)=\phi(n')=\sigma(m)$, put
\begin{align*}
 \sigma_i &:= (s_i; s_{i,0}, \dots, s_{i,k-1}; s_{i,0}', \dots, s_{i,k-1}'; s_{i,0}'', \dots, s_{i,k-1}''),  \\
 \tau_i &:= (t_i; t_{i,0}, \dots, t_{i,k-1}; t_{i,0}', \dots, t_{i,k-1}'; t_{i,0}'', \dots, t_{i,k-1}'').
\end{align*}
Note that with multiplication of $(3k+1)$-tuples defined componentwise, we have $\sigma_{i-1} = \sigma_i \tau_i.$
Let $\mathfrak S_i$ denote the set of $\sigma_i$ arising from solutions $(n,m,m') \in \fancyB_{\phi}\times \fancyB_{\phi}\times\fancyB_{\sigma}$, and let $\mathfrak T_i$ denote the corresponding set of $\tau_i$.
The number of solutions of \eqref{ppq} is
\[ \# \mathfrak{S}_0 = \sum_{\sigma \in \mathfrak S_1} \sum_{\substack{\tau \in \mathfrak{T}_1 \\ \sigma \tau \in \mathfrak S_0}} 1.\]
To estimate this quantity, we apply an iterative procedure based on the identity
\begin{equation}
\label{eq:iterative}
 \sum_{\sigma_{i-1} \in \mathfrak S_{i-1}} \frac{1}{s_{i-1}} = \sum_{\sigma_i \in \mathfrak S_i} \frac{1}{s_i} \sum_{\substack{\tau_i \in \mathfrak T_i \\ \sigma_i \tau_i \in \mathfrak S_{i-1}}} \frac{1}{t_i}.
\end{equation}

First, fix $\sigma_1 \in \mathfrak S_1$. If $\tau_1 \in \mathfrak T_1$ is such that $\sigma_1 \tau_1 \in \mathfrak S_0$, then $t_1 = t_{1,0} = t_{1,0}'=t_{1,0}'' \leq x/s_1$, $t_1$ is composed
of primes $> v_1$, and all of
\[ s_{1,0} t_1+1, \quad s_{1,0}'t_1+1, \quad s_{1,0}''t_1 -1 \]
are prime. Write $t_1 = t_1'Q$, where $Q = P^{+}(t_1)$. Then $Q = P^{+}(p_0-1) \geq x^{1/12}$ by property (ii)
in the definition of $\fancyB_{\phi}$. Hence
\[ \sum_{\substack{\tau_1 \in \mathfrak T_1 \\ \sigma_1 \tau_1 \in \mathfrak S_0}}{1} \leq \sum_{\substack{t_1' \leq x/s_1 \\ p \mid t_1' \Rightarrow p > v_1}}\sum_{x^{1/12} \leq Q\leq \frac{x}{s_1 t_1'}}{1}, \]
where the final sum is over primes $Q$ for which $s_{1,0}t_1' Q+1, s_{1,0}' t_1'Q+1$, and $s_{1,0}'' t_1' Q-1$
are also prime. By Lemma \ref{lem:sieve}, the inner sum over $Q$ is
\[ \ll
\begin{cases} \frac{x}{s_1 t_1' (\log{x})^4} (\log_2{x})^4 &\text{if $0 \not\in \I$}, \\
 \frac{x}{s_1 t_1' (\log{x})^3} (\log_2{x})^3 &\text{otherwise}.
\end{cases}
\]
Moreover,
\[ \sum_{t_1'} \frac{1}{t_1'} \leq \prod_{v_1 < p \leq x/s_1} \left(1+\frac{1}{p} + \frac{1}{p^2} + \dots\right) \ll \frac{\log{x}}{\log{v_1}} = (\log{x})^{1-\a}.\]
It follows that
\begin{equation}\label{eq:induction0} \sum_{\substack{\tau_1 \in \mathfrak T_1 \\ \sigma_1 \tau_1 \in \mathfrak S_0}} 1 \ll \frac{x}{s_1 (\log{x})^{2+\a + \chi(0)}} (\log_2{x})^4, \end{equation}
where here and below, $\chi$ denotes the characteristic function of $\I^{c} = [0,k-1]\setminus \I$.

Now suppose that $2 \leq i \leq k$, $\sigma_i \in \mathfrak S_i$, $\tau_i \in \mathfrak T_i$ and $\sigma_i \tau_i \in \mathfrak S_{i-1}$. Observe that
\begin{equation}\label{eq:threefoldfact}
 t_{i,0} \cdots t_{i,i-1} = t_{i,0}' \cdots t_{i,i-1}' = t_{i,0}'' \cdots t_{i,i-1}'' = t_{i}. \end{equation}
Let $Q_1, Q_2$, and $Q_3$ be the largest prime factors of $t_{i,i-1}, t_{i,i-1}',$ and $t_{i,i-1}''$, respectively,
and let $b, b'$, and $b''$ be the corresponding cofactors. Recall that $Q_1, Q_2, Q_3 > v_{i-1}^{1/12}$ by (ii)
in the definitions of $\fancyB_{\phi}$ and $\fancyB_{\sigma}$.  Now fix
 $\J \subset \{1,2,3\}$ indexing the distinct $Q_j$. Then $\prod_{j \in \J}Q_j \mid t_i$. Moreover, $t_i$ is squarefree  (by property (v) in the definition of $\fancyB_{\phi}$),
and so each $Q_j$ divides exactly one term from each of the three $i$-fold factorizations exhibited in \eqref{eq:threefoldfact}. Dividing all such terms
by their corresponding factor $Q_j$, we obtain an induced identity of the shape
\begin{equation}
\label{eq:threefoldfact2}
 \hat{t}_{i,0} \cdots \hat{t}_{i,i-1} = \hat{t}_{i,0}' \cdots \hat{t}_{i,i-1}' = \hat{t}_{i,0}'' \cdots \hat{t}_{i,i-1}'' = \frac{t_i}{\prod_{j \in J}Q_j},
\end{equation}
where
\begin{equation}\label{eq:trip1} \hat{t}_{i,j} = \hat{t}_{i,j}' \quad\text{for all}\quad i \in \I \cap [0,i-1], \end{equation}
and
\begin{equation}\label{eq:trip2}
\left|\Omega(\cdot, v_{i}, v_{i-1}) - (\a^{i-1}-\a^i)\log_2{x}\right| \leq 3k\sqrt{(\a^{i-1}-\a^i)\log_2{x}}
\end{equation}
for each of the $3i$ factors in the triple $i$-fold factorization \eqref{eq:threefoldfact2}.  Here we use (iii) from the definitions of $\fancyB_{\phi}$ and $\fancyB_{\sigma}$. Also, the uniqueness statement in (ii) allows us to deduce that $b = \hat{t}_{i,i-1}$, $b' = \hat{t}_{i,i-1}$, $b'' = \hat{t}_{i,i-1}$.
Putting $t= t_i/\prod_{j \in \J}{Q_j}$, we can expand
\begin{equation}
\label{eq:intersum0}  \sideset{}{^{(\J)}}\sum_{\substack{\tau_i \in \mathfrak T_i \\ \sigma_i \tau_i \in \mathfrak S_{i-1}}} \frac{1}{t_i} = \sum_{t}\frac{1}{t} \sum_{\substack{\text{$3$-fold}\\\text{factorizations}}}\sum_{\text{posns. of $Q_j$}}\prod_{j \in \J} \left(\sum_{Q_j}\frac{1}{Q_j}\right).
\end{equation}
The superscript in the left-hand sum indicates that the sum is only taken over $\tau_i$ which correspond to the index set $\J$. The second right-hand sum is over factorizations \eqref{eq:threefoldfact2} corresponding to $t$ (which necessarily satisfy \eqref{eq:trip1} and \eqref{eq:trip2}), and the third right-hand sum is over the original positions in \eqref{eq:threefoldfact} of the $Q_j$, before they were divided out to produce \eqref{eq:threefoldfact2}.

Now we estimate the innermost sum in \eqref{eq:intersum0}. Observe that
\[ p_{i-1} = s_{i,i-1} b Q_1 + 1, \quad p_{i-1}' = s_{i,i-1}' b' Q_2 + 1, \quad q_{i-1} = s_{i,i-1}'' b'' Q_3 - 1 \]
are all prime. For each $j \in \J$, let $n_j$ be the number of distinct linear forms among these which involve the prime $Q_j$. Since $Q_j$ itself is also prime,
the sieve (Lemma \ref{lem:sieve}) implies that the number of possibilities for $Q_j \leq z$ is $\ll z (\log_2{x})^{n_j+1}/(\log{z})^{n_j+1}$, and so
\[ \sum_{Q_j \geq v_{i-1}^{1/12}} \frac{1}{Q_j} \ll \frac{1}{(\log{v_{i-1}})^{n_j}} (\log_2{x})^{n_j+1} = \frac{1}{(\log{x})^{\a^{i-1} n_j}} (\log_2{x})^{n_j+1}. \]
We have $\sum_{j \in \J}n_j = 2 + \chi(i-1)$, and so
\[ \prod_{j \in \J}\left(\sum_{Q_j \geq v_{i-1}^{1/12}} \frac{1}{Q_j}\right) \ll \frac{1}{(\log{x})^{2\a^{i-1} + \a^{i-1}\chi(i-1)}}(\log_2{x})^6. \] We insert this into \eqref{eq:intersum0}. Note that the number of possible original postions of the $Q_j$ is bounded by $i^{2|\J|} \leq i^6 \ll_{k}1$. Thus, letting $h(t)$ denote the number of triple $i$-fold factorizations of the form \eqref{eq:threefoldfact2} satisfying \eqref{eq:trip1} and \eqref{eq:trip2}, we find that
\begin{equation}\sideset{}{^{(\J)}}\sum_{\substack{\tau_i \in \mathfrak T_i \\ \sigma_i \tau_i \in \mathfrak S_{i-1}}}\frac{1}{t_i} \ll_{k} \frac{1}{(\log{x})^{2\a^{i-1} + \a^{i-1}\chi(i-1)}}(\log_2{x})^6 \sum_{t}\frac{h(t)}{t}.\label{eq:tirecipup1}\end{equation}

For each value of $t$ that can appear here, we have that $t$ is squarefree, supported on primes in $(v_i, v_{i-1}]$, and satisfies
\[ |\Omega(t) - i(\a^{i-1}-\a^i)\log_2{x}| \leq 3ki \sqrt{(\a^{i-1}-\a^i)\log_2{x}}. \]
(The last inequality is a consequence of \eqref{eq:trip2}.) For all such $t$, we have for $N := \#\I\cap [0,i-1]$,
\[ h(t) \leq \sum_{\substack{i_0, \dots, i_{N}\\
i_l\text{ satisfies \eqref{eq:trip2} for $0\leq l \leq N$}\\
i_{N+1}:= \Omega(t) - \sum_{0 \leq l \leq N}{i_l}}} \binom{\Omega(t)}{i_0, \dots, i_N, i_{N+1}}((i-N)^{\Omega(t)-i_{N+1}})^2 i^{\Omega(t)};
\]
here the multinomial coefficient accounts for the common portion of the first two factorizations of \eqref{eq:threefoldfact2}, the factor $((i-N)^{\Omega(t)-i_{N+1}})^2$ bounds the number of possibilities
for the uncommon portion, and the factor $i^{\Omega(t)}$ bounds the total number of possibilities for the third factorization (which is unrestricted by \eqref{eq:trip1}).
A calculation with Stirling's formula now shows that
\[ h(t) \leq (i^{2} (i-N)^{1-N/i})^{\Omega(t)} \exp(O_k(\log_3{x}\sqrt{\log_2{x}})), \]
uniformly in $t$. Put
\[ I:= i(\a^{i-1}-\a^i)\log_2{x} + 3k^2 \sqrt{(\a^{i-1}-\a^i)\log_2{x}}. \]
Then with
\[
S:= \sum_{v_i < p \leq v_{i-1}} \frac{1}{p} = (\a^{i-1}-\a^i)\log_2{x} + O(1),
\]
the multinomial theorem gives us that
\begin{equation}\label{eq:tirecipup2} \sum_{t} \frac{h(t)}{t} \leq \exp(O_k(\log_3{x}\sqrt{\log_2{x}})) \sum_{j \leq I} (i^2 (i-N)^{1-N/i})^{j} \frac{S^j}{j!}.\end{equation}
For large $x$, we have $I < i^2 S \leq i^2 S (i-N)^{1-N/i}$, and so by Lemma \ref{lem:poisson1},
\begin{align}
 \sum_{j \leq I} (i^2 (i-N)^{1-N/i})^{j} \frac{S^j}{j!} &\leq \left(\frac{e i^2 (i-N)^{1-N/i} S}{I}\right)^{I} \notag\\
 &\leq \exp(O_k(\sqrt{\log_2{x}})) (ei(i-N)^{1-N/i})^{I} \notag\\
 &= (\log{x})^{(\a^{i-1}-\a^i)(i+i\log{i}) +(i-N)\log(i-N)(\a^{i-1}-\a^i) + o_k(1)}.\label{eq:tirecipup3}
\end{align}
From \eqref{eq:tirecipup1}, \eqref{eq:tirecipup2}, and \eqref{eq:tirecipup3}, we deduce that
\begin{equation}\label{eq:inductiongen} \sum_{\substack{\tau_i \in\mathfrak T_i \\ \sigma_i \tau_i \in \mathfrak S_{i-1}}}\frac{1}{t_i} \leq (\log{x})^{(\a^{i-1}-\a^i)(i+i\log{i}) -2\a^{i-1} +(i-N)\log(i-N)(\a^{i-1}-\a^i)-\chi(i-1) \a^{i-1}+ o_k(1)};
\end{equation}
we use here that there are only finitely many possibilities for $\J$, so that we can drop the superscript $(\J)$ on the sum.

By \eqref{eq:iterative}, \eqref{eq:induction0}, and \eqref{eq:inductiongen}, we see that
\begin{multline*} \#\mathfrak S_0 \leq (\log{x})^{o_k(1)} \frac{x}{(\log{x})^{2+\a - \sum_{i=2}^{k}((\a^{i-1}-\a^i)(i+i\log{i})-2\a^{i-1})}} \\
\times (\log{x})^{\sum_{i=2}^{k}(i-\#\I\cap[0,i-1])\log(i-\#\I\cap[0,i-1])(\a^{i-1}-\a^i) -\sum_{i=0}^{k-1}\chi(i) \a^i} \times  \sum_{\sigma_k \in \mathfrak S_k}\frac{1}{s_k},
\end{multline*}
with the convention that $0\log{0}=0$. From (iv) in the definitions of $\fancyB_{\phi}$ and $\fancyB_{\sigma}$, the final sum is $6^{-k} \le 1$.
Also,
\begin{align*}
2+\a - \sum_{i=2}^{k}((\a^{i-1}-\a^i)(i+i\log{i})-2\a^{i-1})  &= 2 - \sum_{i=1}^{k-1} a_i \alpha^i + (k\log{k}+k)\alpha^k \\
&= 2-F(\alpha) + O((k\log{k}) \alpha^k).
\end{align*}
Since $F(\a) \le F(\rho)=1$, for sufficiently large $k$, this last expression is at least $1-\eps/2$.
So the desired upper bound on $\mathfrak S_0$ follows if it is shown that
\begin{equation}\label{eq:nonpositive}
 \sum_{i=2}^{k}(i-\#\I\cap[0,i-1])\log(i-\#\I\cap[0,i-1])(\a^{i-1}-\a^i) \leq \sum_{i=0}^{k-1}\chi(i) \a^i. \end{equation}
For brevity, write $M(i):= i - \#\I\cap[0,i-1] = \#\I^c\cap [0,i-1]$. Abel summation implies that for the left-hand side of \eqref{eq:nonpositive}, we have
\[ \sum_{i=2}^{k} M(i)\log{M(i)} (\a^{i-1}-\a^i) \leq \sum_{i=1}^{k-1} (M(i+1)\log{M(i+1)}-M(i)\log{M(i)})\a^i. \]
Suppose that
\[ 1 \leq i_1 < i_2 < \dots < i_L \leq k-1 \]
is a list of the elements of $\I^c$ in $[1,k-1]$. If $0 \not\in \I^{c}$, then
\begin{multline*} \sum_{i=1}^{k-1} (M(i+1)\log{M(i+1)}-M(i)\log{M(i)})\a^i\\  = \sum_{l=2}^{L} (a_{l-1}+1)\a^{i_l}
= \sum_{l=2}^{L} a_{l-1} \a^{i_l} + \sum_{i=i_2}^{k-1} \chi(i)\a^i \\\leq
\alpha^{i_{1}}\sum_{i=1}^{\infty} a_i \a^i +
\sum_{i=i_2}^{k-1} \chi(i)\a^i \leq \a^{i_1} + \sum_{i=i_2}^{k-1} \chi(i)\a^i =
\sum_{i=0}^{k-1}\chi(i)\a^i.
\end{multline*}
If $0 \in \I^{c}$, then
\begin{multline*}
 \sum_{i=1}^{k-1} (M(i+1)\log{M(i+1)}-M(i)\log{M(i)})\a^i \leq \sum_{l=1}^{L} (a_l+1)\a^{i_l} \\ \leq \sum_{i=1}^{L} a_l \a^l + \sum_{i=1}^{k-1} \chi(i)\a^i < 1 + \sum_{i=1}^{k-1} \chi(i)\a^i = \sum_{i=0}^{k-1}\chi(i) \a^i.
\end{multline*}
So \eqref{eq:nonpositive} holds in either case.

This concludes the proof of the first half of Lemma \ref{lem:upper}. The estimate for the number of solutions to $\phi(n)=\sigma(m)=\sigma(m')$, where $(n,m,m') \in \fancyB_{\phi}\times\fancyB_{\sigma}\times\fancyB_{\sigma}$, is entirely analogous.

\medskip
{\bf Remarks.}  The term $(\log x)^{1+o(1)}$ in Theorem 1 can be sharpened
by allowing $k,\a$ to depend on $x$ in the above argument, or by using the
fine structure theory of totients from \cite{ford98}.

\providecommand{\bysame}{\leavevmode\hbox to3em{\hrulefill}\thinspace}
\providecommand{\MR}{\relax\ifhmode\unskip\space\fi MR }
\providecommand{\MRhref}[2]{%
  \href{http://www.ams.org/mathscinet-getitem?mr=#1}{#2}
}
\providecommand{\href}[2]{#2}

\end{document}